\newcommand{\Ass}[1]{X_{#1}}
\newcommand{\Ring}[1]{R_{#1}}
\newcommand{\VA}{V}
\newcommand{\mc}{\mathcal}
\newcommand{\mf}{\mathfrak}
\newcommand{\on}{\operatorname}
\newcommand{\Vir}{Vir}
\newcommand{\isomap}{{\;\stackrel{_\sim}{\to}\;}}
\newcommand{\nc}{\newcommand}
\nc{\Hp}[1]{H^{#1}}
\newcommand{\affg}{\widehat{\mathfrak{g}}}
\newcommand{\fing}{\mathfrak{g}}
\newcommand{\p}{\mf{p}}
\newcommand{\N}{\mathbb{N}}
\newcommand{\Q}{\mathbb{Q}}
\newcommand{\1}{{\mathbf{1}}}
\newcommand{\bra}{{\langle}}
\newcommand{\ket}{{\rangle}}
\newcommand{\nno}{\nonumber}
\newcommand{\Lam}{\Lambda}
\newcommand{\lam}{\lambda}
\newcommand{\ra}{\rightarrow}
\newcommand{\+}{\mathop{\oplus}}
\newcommand{\Z}{\mathbb{Z}}
\newcommand{\inv}{^{-1}}
\renewcommand{\*}{{\otimes}}
\newcommand{\C}{\mathbb{C}}
\newcommand{\che}{^{\vee}}
\theoremstyle{plain}
\newtheorem{Th}{Theorem}[subsection]
\newtheorem{Pro}[Th]{Proposition}
\newtheorem{Lem}[Th]{Lemma}
\newtheorem{Co}[Th]{Corollary}
\theoremstyle{definition}
\theoremstyle{remark}
\newtheorem{Rem}[Th]{Remark}
\DeclareMathOperator{\Ann}{Ann}
\DeclareMathOperator{\id}{id}
\DeclareMathOperator{\End}{End}
\DeclareMathOperator{\gr}{gr}
\DeclareMathOperator{\Hom}{Hom}
\DeclareMathOperator{\Ad}{Ad}
\DeclareMathOperator{\haru}{span}
\DeclareMathOperator{\Spec}{Spec}
\title{A Remark on the $C_2$-cofiniteness condition  on vertex algebras}
\author{Tomoyuki Arakawa}
\address{Department of Mathematics, 
Nara Women's University, Nara 630-8506, JAPAN}
\curraddr{Research Institute for Mathematical Sciences, Kyoto University,
Kyoto 606-8502,
JAPAN}
\email{arakawa@kurims.kyoto-u.ac.jp}
\thanks{This work  is partially  supported 
by the JSPS Grant-in-Aid  for Scientific Research (B)
No.\ 20340007}
 \subjclass[2010]{17B69,17B65,17B68}
 \keywords{Vertex algebras, $C_2$-cofiniteness}
\begin{document}
\maketitle

\begin{abstract}
We show that 
a finitely strongly generated,
non-negatively graded vertex algebra $V$ 
 is $C_2$-cofinite if and only if 
it is lisse in the sense of 
Beilinson, Feigin and Mazur
\cite{BeiFeiMaz}. 
This shows that 
the $C_2$-cofiniteness  is indeed a natural finiteness
condition. 

\end{abstract}

\section{Introduction}
The purpose of this note is to 
clarify 
the equivalence of the two finiteness conditions
on vertex algebras.

One of them
is the  finiteness condition 
introduced by Zhu \cite{Zhu96},
which is now called the
{\em $C_2$-cofiniteness condition}.
This condition
 has 
been assumed
in many
significant  theories of vertex operator algebras,
such as \cite{Zhu96,DonLiMas00,Miy04,NagTsu05,Hua08,Hua08rigidity}.
However
the original definition 
looks rather technical,
 and hence
it
  has been often considered  
as
  a mere technical condition.

The other is
the  finiteness
condition
 defined by
Beilinson, Feigin and Mazur \cite{BeiFeiMaz}
in the case of the Virasoro (vertex) algebra,
which is called {\em lisse}.
The definition is  as follows.
Let $\mc{L}$ be the Virasoro (Lie) algebra,
$U(\mc{L})$ its universal enveloping algebra.
There is a natural increasing filtration of $U(\mc{L})$
in the Lie theory,
called the {\em standard filtration},
and the associated graded  algebra
$\gr U(\mc{L})$
of $U(\mc{L})$
is isomorphic to the symmetric algebra
$S(\mc{L})$ of $\mc{L}$.
Let $M$ be a finitely generated $\mc{L}$-module,
$\{ \Gamma_p M\}$ a good filtration
(i.e., a filtration compatible with the standard
filtration of $\mc{L}$ such  that
the associated graded space $\gr^{\Gamma}M$
is finitely generated over $S(\mc{L})$).
The  {\em singular support} $SS(M)$ of $M$
is 
the support
of the $S(\mc{L})$-module $\gr^{\Gamma}M$,
which is known to be independent of the choice of a good filtration.
A $\mc{L}$-module $M$ is called lisse if
$\dim SS(M)=0$.
In the case that  $M$ is a highest weight representation
of $\mc{L}$
then
$M$ is lisse
if and only if
any element of $\mc{L}$ acts locally nilpotently 
on $\gr^{\Gamma } M$.
This implies that
 lisse representations are natural analogues
of finite-dimensional representations.

Let
$V$ be
a 
  finitely strongly generated,
non-negatively graded vertex algebra.
The notion of singular supports  
 can be naturally extended to $V$
by using the canonical 
filtration introduced by
 Li \cite{Li05}.
More precisely,
 define 
 the {\em associated variety}
$X_M$ of a $V$-module $M$
as the support of the $V/C_2(V)$-module $M/C_2(M)$.
Then
 the singular support of $M$ can be 
naturally  defined as a subscheme 
of the infinite jet scheme of
$X_M$.
Having defined the associated variety and the singular support,
 the equivalence 
of the $C_2$-cofiniteness condition 
and the lisse condition
easily follows from the fact that
the jet scheme of a zero-dimensional variety is zero-dimensional
(see Theorems \ref{Th:C2-vs-lisse}, \ref{Th:C2-vs-lisse-modules}).

We note that in the case that $V$ is a Virasoro vertex algebra
and $M$ is a $V$-module,
the Li filtration of  $M$
is slightly different  from 
the Lie theoretic
filtration defined by considering it as a module over the Virasoro 
(Lie) algebra
$\mc{L}$.
Nevertheless 
$M$ is lisse if and only if it is so in the sense of \cite{BeiFeiMaz}
provided that
$M$ is 
 a highest weight representation
of $\mc{L}$
(see Proposition \ref{Pro:comparison}).
In the case that $V$ is an affine vertex algebra
associated with a Lie algebra
the Li filtration is essentially the same as  
the Lie theoretic filtration,
although it is a {\em decreasing} filtration
(see Propositions \ref{Pro:F=G}, 
\ref{Pro:in-the-case-of-affine-vertex-algebra}).

\smallskip

In a separate  paper \cite{Ara09b}
we 
 prove 
the $C_2$-cofiniteness  for  a large family
of $W$-algebras
using the methods in this note,
including
all 
the (non-principal)
exceptional $W$-algebras 
recently discovered by
Kac and Wakimoto \cite{KacWak08}.

\medskip

\subsection*{Notation}
The ground field
will be $\C$
throughout the note .
\section{Graded Poisson vertex algebras associated with vertex algebras}
\label{section:associated-graded}
\label{section:VPA}
\subsection{Vertex algebras and their modules}
A (quantum) {\em field} on a vector space $\VA$ is a formal series
\begin{align*}
 a(z)=\sum_{n\in \Z}a_{(n)}z^{-n-1}\in (\End \VA)[[z,z\inv]]
\end{align*}
such that $a_{(n)}v=0$ with $n\gg 0$ for any $v\in \VA$.
Let $Fields(\VA)$ denote the space of all fields on $\VA$.

A {\em vertex algebra} is a vector space $\VA$ equipped 
with the following data:
\begin{itemize}
 \item a linear map $Y(?,z): \VA\ra Fields(\VA)$,
$a\mapsto a(z)=\sum_{n\in \Z}a_{(n)}z^{-n-1}$,
\item a vector $\1\in \VA$, called the {\em vacuum vector},
\item a linear operator $T: \VA\ra \VA$,
called the {\em translation operator}.
\end{itemize}
These data are subjected to satisfy
 the following axioms:
\begin{enumerate}
 \item $(Ta)(z)=\partial_z a(z)$,
\item $\1(z)=\id_\VA$,
\item $a(z)\1\in \VA[[z]]$ and 
$a_{(-1)}\1=a$,
\item
\begin{align*}
& \sum_{j\geq 0}\begin{pmatrix}
		m\\j
	       \end{pmatrix}(a_{(n+j)}b)_{(m+k-j)}\\
&=
\sum_{j\geq 0}(-1)^j \begin{pmatrix}
		      n\\ j
		     \end{pmatrix}
\left(a_{(m+n-j)}b_{(k+j)}-(-1)^n b_{(n+k-j)}a_{(m+j)}\right).
\end{align*}
\end{enumerate}

A {\em Hamiltonian} of $\VA$
is a semisimple operator $H$ on $V$
satisfying
\begin{align}
 [H,a_{(n)}]=-(n+1)a_{(n)}+(Ha)_{(n)}
\label{eq:Hamiltonian}
\end{align}
for all $a\in V$,
$n\in \Z$.
A vertex algebra equipped with a Hamiltonian $H$
is called {\em graded}.
Let $V_{\Delta}=\{a\in V; H a=\Delta a\}$ for $\Delta\in \C$,
so that $V=\bigoplus_{\Delta\in \C}\VA_{\Delta}$.
For $a\in \VA_{\Delta}$,
 $\Delta$ is called the {\em conformal weight} of $a$
and denoted by $\Delta_a$.
We have
\begin{align*}
 a_{(n)}b\in \VA_{\Delta_a+\Delta_b-n-1}%\quad
%\text{for all $a\in \VA_{\Delta}$,  $b\in \VA_{\Delta_b}$ $n\in \Z$.}
\end{align*}
for homogeneous elements $a,b\in \VA$.

{\em Throughout the note 
$V$ is assumed to be $\frac{1}{r_0}\Z_{\geq 0}$-graded
for some $r_0\in \N$},
that is,
$V$ is graded and  $V_{\Delta}=0$ for $\Delta\not\in \frac{1}{r_0}\Z_{\geq 0}$.

A graded vertex algebra $\VA$ is
called {\em conformal} if there exists a vector $\omega$
(called the {\em conformal vector}) 
and $c_V\in \C$ (called the {\em central charge})
satisfying
\begin{align*}
& \omega_{(0)}=T,\quad 
\omega_{(1)}=H,\quad \\
&[\omega_{(m+1)},\omega_{(n+1)}]=(m-n)\omega_{(m-n+1)}
+\frac{(m^3-m)}{12}\delta_{m+n,0}c_V.
\end{align*}
A $\Z$-graded conformal vertex algebra is also called a
{\em vertex operator algebra}.

A vertex algebra $\VA$ is called {\em finitely strongly generated}
if there exists a finitely many elements $a^1,\dots,a^r$
such that
$V$ is spanned by  the elements of the form
\begin{align}
 a^{i_1}_{(-n_1)}\dots a^{i_r}_{(-n_r)}\1
\end{align}
with $r\geq 0$,
$n_i\geq 1$.

A {\em module} over a vertex algebra $\VA$ is a vector space $M$
together with a linear map
\begin{align*}
 Y^M(?,z): \VA\ra Fields(M),\quad
a\mapsto a^M(z)=\sum_{n\in \Z}a_{(n)}^M z^{-n-1},
\end{align*}
which satisfies the following axioms:
\begin{enumerate}
 \item $\1^M(z)=\id_M$,
\item
\begin{align*}
& \sum_{j\geq 0}\begin{pmatrix}
		m\\j
	       \end{pmatrix}(a_{(n+j)}b)_{(m+k-j)}^M\\
&=
\sum_{j\geq 0}(-1)^j \begin{pmatrix}
		      n\\ j
		     \end{pmatrix}
\left(a_{(m+n-j)}^M b_{(k+j)}^M-(-1)^n b_{(n+k-j)}^Ma_{(m+j)}^M\right).
\end{align*}
\end{enumerate}

A $V$-module $M$  is called
{\em graded} 
if there is a compatible semisimple action of $H$ on $M$,
that is,
$M=\bigoplus_{d\in \C}M_d$,
where
$M_d=\{m\in M; Hm=dm\}$,
and 
$
[H,a^M_{(n)}]=-(n+1)a_{(n)}^M+(H a)^M_{(n)}
$
for all $a\in V$.
We have
 $a_{(n)}^M M_d\subset M_{d+\Delta_a-n-1}$ for 
a homogeneous element $a\in \VA$.
If $\VA$ is conformal and $M$ is a $\VA$-module
on which  $\omega^M_{(1)}$ acts locally finite,
then $M$ is graded by the semisimplification of
$\omega_{(1)}^M$.

If  no confusion can arise
 we write simply $a_{(n)}$ for $a_{(n)}^M$.

{\em Throughout the note a $V$-module $M$
is assumed to be lower truncated},
that is, 
$M$ is graded and
there exists a finites subset
$\{d_1,\dots,d_r\}\subset \C$
such that
$M_d\ne 0$ unless $d\in d_i+\frac{1}{r_0}\Z_{\geq 0}$
for some $i$.

For a $V$-module $M$,
set
\begin{align*}
& C_2(M)=\haru_{\C}\{a_{(-2)}m; a\in \VA, m\in M\}.
\end{align*}
Then 
\begin{align*}
C_2(M)=\haru_{\C}\{a_{(-n)}m; a\in \VA, m\in M,\ n\geq 2\}
\end{align*}
by the axiom (i) of vertex algebras.
A $\VA$-module $M$ is called 
{\em $C_2$-cofinite}
if $\dim M/C_2(M)<\infty$,
and $V$ is called $C_2$-cofinite
if it is $C_2$-cofinite as a module over itself.

Suppose  that $V$ is conformal.
Then
a  $V$-module $M$ is called
{\em $C_1$-cofinite} \cite{Li99}
 if $M/C_1(M)$
is finite-dimensional,
where
\begin{align*}
&C_1(M)=\haru_{\C}\{\omega_{(0)}m, a_{(-1)}m;a\in 
\bigoplus_{\Delta>0}V_{\Delta},
m\in M
\}.
\end{align*}
If 
%$V$ is $\Q_{\geq 0}$-graded,
%that is, if 
%$V=\bigoplus_{\Delta\in \Q_{\geq 0}}V_{\Delta}$
%(without negative weights),
%and
$V_0=\C \1$,
then $C_2(V)$ is a subspace of $C_1(V)$
(\cite[Remark 3.2]{Li99}).

\subsection{Vertex Poisson algebras and their modules}
A vertex algebra $\VA$ is said to be {\em commutative}
if $a_{(n)}=0$ in $\End \VA$ for all $n\geq 0$, $a\in \VA$.
This is equivalent to that
\begin{align*}
 [a_{(m)}, b_{(n)}]=0\quad \text{for }a,b\in V,\ m,n\in \Z.\end{align*}
A commutative vertex algebra
is the same as a differential algebra
(=a unital commutative algebra with a derivation) \cite{Bor86}:
the multiplication is given by
$a\cdot b=a_{(-1)}b$ for $a,b\in \VA$;
The derivation is given by the translation operator $T$.

A commutative vertex algebra $\VA$
is called a {\em vertex Poisson algebra} 
\cite{FreBen04}
if it is also  equipped with a linear operation 
\begin{align*}
Y_-(?,z)
:\VA\ra \Hom (\VA,z\inv \VA[z\inv]),
\quad a\mapsto a_-(z),
\end{align*}
such that
\begin{align}
 &(Ta)_{(n)}=-n a_{(n-1)},\label{eq:axiom1-for-VPA}\\
&a_{(n)}b=\sum_{j\geq 0}(-1)^{n+j+1}\frac{1}{j!}T^j (b_{(n+j)}a),
\label{eq:axiom2-for-VPA}\\
&[a_{(m)},b_{(n)}]=\sum_{j\geq 0}\begin{pmatrix}
					m\\j
				       \end{pmatrix}(a_{(j)}b)_{(m+n-j)},
\label{eq:axiom3-for-VPA}
\\
&a_{(n)}(b \cdot  c)=(a_{(n)}b)\cdot c+b\cdot (a_{(n)}c)
\label{eq:axiom4-for-VPA}
\end{align}
for $a,b,c \in \VA$ and $n,m\geq 0$.
Here, by abuse of notation,
we have set
\begin{align*}
 a_-(z)=\sum_{n\geq 0}a_{(n)}z^{-n-1}.
\end{align*}

Below, for an element $a$
of a vertex Poisson algebra,
we will denote by
 $a_{(n)}$ with $n\leq -1$ the  Fourier 
coefficient of $z^{-n-1}$ in  the field $Y(a,z)=a(z)$,
and by 
   $a_{(n)}$ with $n\geq 0$ the Fourier 
coefficient of 
$z^{-n-1}$ in  $Y_-(a,z)=a_-(z)$.

A vertex Poisson algebra $V$ is called {\em graded} if there exists a
semisimple operator $H$ on $V$
satisfying 
\eqref{eq:Hamiltonian}
for all $n\in \Z$.

A {\em module} over a vertex Poisson algebra 
$\VA$ is a module  $M$ over 
$\VA$ as an associative algebra
together with a linear  map
\begin{align*}
 Y^M_-(?,z): \VA\ra \Hom(M,z\inv M[z\inv]),\quad
a\mapsto a_-^M(z)=\sum_{n\geq 0}a_{(n)}^M z^{-n-1},
\end{align*}
such that 
\begin{align}
 &\1^M(z)=\id_M,\nno\\
&(Ta)_{(n)}^M=-n a_{(n-1)}^M, \nno\\
&[a_{(m)}^M,b_{(n)}^M]=\sum_{i\geq 0}\begin{pmatrix}
					m\\ i
				       \end{pmatrix}
(a_{(i)}b)_{(m+n-i)}^M, \nno\\
&a_{(n)}^M(b\cdot m)=(a_{(n)}b)\cdot m+b\cdot (a_{(n)}^Mm).
\label{eq:module-for-vertex-Poisson}
\end{align}
for $a,b\in \VA$, $m\in M$ and $n\geq 0$.

A module $M$ over a graded vertex Poisson algebra $V$
is called {\em graded} if there exists a semisimple
operator $H^M$ on $M$
satisfying \begin{align}
 [H^M,a^M_{(n)}]=-(n+1)a^M_{(n)}+(Ha)^M_{(n)}
\label{eq:Hamiltonian2}
\end{align}
for all $a\in V$,
$n\in \Z$.

If  no confusion can arise
 below we write  $a_{(n)}$ for $a_{(n)}^M$
and $H$ for $H^M$.

We   call  $M$ a 
 {\em differential $\VA$-module}  
if
it is
equipped with the linear action of $T$
satisfying
\begin{align*}
[T,a_{(n)}]=-n a_{(n-1)}\quad \text{for all }n\in \Z.
%& T(a\cdot m)=(Ta)\cdot m+a\cdot (Tm)\quad \text{for }
%a\in \VA,\ m\in M,\\
%&T(a_{(n)}n)=(Ta)_{(n)}m+a_{(n)} (Tm)\quad \text{for }a\in \VA,\
%m\in M,\ n\geq 0.
\end{align*}
\subsection{Functions on jet schemes of affine Poisson varieties
are vertex Poisson algebras}
For a scheme $X$ of finite type %(over $\C$)
we denote by
 $X_m$ be the jet scheme of order $m$
of $X$
and by $X_{\infty}$
the infinite jet scheme
of $X$.

Let us recall the definition of jet schemes.
For general theory of jet schemes see e.g., \cite{EinMus}.
The connection between jet schemes and chiral algebras 
goes back to Beilinson and Drinfeld \cite{BeiDri04}.
The scheme $X_m$ is  determined by its functor of points:
for every commutative  $\C$-algebra $A$,
there is  a bijection
\begin{align*}
 \Hom(\Spec A, X_m)\cong \Hom(\Spec A[t]/(t^{m+1}),X).
\end{align*}
If $m>n$, we have projections
$ X_m\ra X_n$.
This yields  a projective system
$\{X_m\}$ of schemes,
and the infinite jet scheme $X_{\infty}$
is the projective limit
$\lim\limits_{\underset{m}{\leftarrow}}
X_{m}$
in the category of schemes.

For an affine scheme $X=\Spec R$,
%where $R$ is a finitely generated $\C$-algebra,
the jet scheme $X_m$ is explicitly described.
Choose a presentation  
$R=\C[x^1,\dots, x^r]/\bra f_1,\dots,f_s\ket$.
%\begin{align*}%\label{eq:presentaiton-of-R}
%R=\C[x^1,\dots, x^r]/\bra f_1,\dots,f_s\ket
%h\end{align*}
Define new variables $x^j_{(-i)}$ for $i=1,\dots,m+1$
and a derivation $T$ of the
 ring $\C[x^j_{(-i)};i=1,2,\dots,m+1,\ 
 j=1,\dots,r
]$
 by setting
\begin{align*}
 T x^j_{(-i)}=\begin{cases}
	    i x^j_{(-i-1)}&\text{for $i\leq m$},\\
0&\text{for $i=m+1$}.
	   \end{cases}
\end{align*}
Identify $x^j$ with $x^j_{(-1)}$
%and consider $f_i$ as an element of 
%$\C[x^j_{(-i)}; j=1,\dots,r,i=0,1,\dots,m]$.
and set
\begin{align*}
 R_m=\C[x^j_{(-i)}; i=1,\dots,m+1,\ j=1,\dots,r
]
/
\bra T^j f_i;
i=1,\dots, s,\
j=0,\dots,m+1\ket,
%\label{eq:R-infty}
\end{align*}
and we have
$X_m \cong \Spec R_m$.

Let $R_{\infty}$ denotes the differential algebra
obtained from $R_m$ by taking the limit $m\ra {\infty}$:
\begin{align}
 R_{\infty}=\C[x^j_{(-i)}; i=1,2,\dots,\
j=1,\dots,
]/
\bra T^j f_i;i=1,\dots, s,
j=0,\dots, \ket.
\label{eq:R-infty}
\end{align}
Then 
$
X_{\infty}\cong \Spec (R_{\infty})
$.

\begin{Pro}\label{Pro:R-infty-is-v-poisson}
 Let $R$ be a Poisson algebra. 
Then there is a unique vertex Poisson algebra structure
on
 $R_{\infty}$ 
%of functions on the infinite jet scheme of  $\Spec R$
such that
\begin{align*}
 u_{(n)}v=\begin{cases}
	   \{u,v\}&\text{if }n=0,\\
0&\text{if }n>0,
	  \end{cases}
\end{align*}
for $u,v\in R\subset R_{\infty}$.
%and
%\begin{align*}
% (a\cdot b)_{(n)}=\sum_{j\geq 0}
%\frac{1}{j!}(T^j a) b_{(n+j)}+
%\sum_{j\geq 0}\frac{1}{j!}(T^j b) a_{(n+j)}
%\end{align*}
%for all $a, b\in R_{\infty}$,
%$n\geq 0$.
\end{Pro}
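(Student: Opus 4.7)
The plan is to prove uniqueness directly from the axioms, and then to establish existence by first constructing the structure on the free differential algebra and then showing it descends to $R_{\infty}$.

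\emph{Uniqueness.} Any vertex Poisson structure with the stated property has $u_{(n)}v$ determined on $R\subset R_{\infty}$ by hypothesis. Axiom \eqref{eq:axiom1-for-VPA} then fixes $(T^k u)_{(n)}$ for $u\in R$ and $k\geq 0$, so $a_{(n)}$ is determined whenever $a$ lies in the differential subalgebra generated by $R$, which is all of $R_{\infty}$. The Leibniz-type axiom \eqref{eq:axiom4-for-VPA} determines $a_{(n)}(b\cdot c)$ from $a_{(n)}b$ and $a_{(n)}c$, so $a_{(n)}$ is determined on every element in the second slot as well. Hence the structure is forced, which gives uniqueness.

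\emph{Existence.} Write $R=S/I$ with $S=\C[x^1,\dots,x^r]$ and $I=\langle f_1,\dots,f_s\rangle$; since $R$ is Poisson, $I$ is a Poisson ideal in $S$. Let $\tilde{R}=\C[x^j_{(-i)};\ i\geq 1,\ j=1,\dots,r]$ be the free differential algebra with derivation $T$ as in \eqref{eq:R-infty}, so $S_{\infty}=\tilde{R}$ and $R_{\infty}=\tilde{R}/J$, where $J$ is the differential ideal generated by the $f_i$. I would first build a vertex Poisson structure on $\tilde{R}$: on generators, set $(x^j)_{(0)}x^k=\{x^j,x^k\}$ and $(x^j)_{(n)}x^k=0$ for $n>0$. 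Extend the operations $a\mapsto a_{(n)}$ by insisting on \eqref{eq:axiom1-for-VPA} (to pass from $x^j$ to its derivatives $T^i x^j$) and on \eqref{eq:axiom4-for-VPA} (to reach arbitrary polynomials); symmetry in the two arguments is then imposed via \eqref{eq:axiom2-for-VPA}. The fact that these prescriptions are consistent and that \eqref{eq:axiom3-for-VPA} holds on all of $\tilde R$ is a direct but somewhat tedious verification that reduces, after applying $T$-compatibility and Leibniz, to the Jacobi identity of the Poisson bracket on $S$. Finally, to descend to $R_{\infty}$ it suffices to show $J$ is preserved by all $a_{(n)}$ with $n\geq 0$. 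Since $I$ is a Poisson ideal, $(x^j)_{(n)} f_i\in I\subset J$ for every $i,j$ and $n\geq 0$; Leibniz \eqref{eq:axiom4-for-VPA} upgrades this to $a_{(n)}f_i\in J$ for every $a\in\tilde R$, and combining \eqref{eq:axiom1-for-VPA} with the commutation relation $[T,a_{(n)}]=-n\,a_{(n-1)}$ forced by \eqref{eq:axiom3-for-VPA} gives $a_{(n)}(T^k f_i)\in J$, so $J$ is a vertex Poisson ideal.

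\emph{Main obstacle.} The routine work concentrates in the construction on $\tilde R$: one must check that extending the bracket from generators by iterated use of \eqref{eq:axiom1-for-VPA}, \eqref{eq:axiom2-for-VPA}, and \eqref{eq:axiom4-for-VPA} is well-defined and globally satisfies the commutator identity \eqref{eq:axiom3-for-VPA}. This is the standard but nontrivial statement that the free differential extension of a Poisson algebra is a vertex Poisson algebra; the step specific to $R_{\infty}$, namely that $J$ is a vertex Poisson ideal, is then immediate from the Poisson-ideal property of $I$.
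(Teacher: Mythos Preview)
Your outline is correct in spirit, but it takes a different route from the paper and leaves the hardest step as an exercise.

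\textbf{Comparison with the paper.} The paper does not pass through a presentation $R=S/I$ or a free differential algebra at all. Instead it works directly on $R_{\infty}$: it writes down the explicit formula
\[
u_{(n)}(T^{l}v)=\begin{cases}\dfrac{l!}{(l-n)!}\,T^{\,l-n}\{u,v\}&l\ge n,\\ 0&l<n,\end{cases}\qquad u,v\in R,
\]
checks by hand that the map $R\to \on{Der}(R_{\infty})[[z^{-1}]]z^{-1}$ so defined satisfies \eqref{eq:axiom1-for-VPA}--\eqref{eq:axiom3-for-VPA} on $R$, and then extends to the first argument via the ``singular part'' formula $a_{-}(z)T^{k}u=\on{Sing}\big(e^{zT}(-\partial_z)^{k}u_{-}(-z)a\big)$. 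The point is that the global verification of the axioms is then delegated to Li's generation theorem \cite[Theorem~3.6 and Proposition~3.10]{Li04}, which says that once these compatibilities hold on a generating set, the full vertex Poisson structure exists. Your approach trades this citation for a direct inductive check on $\tilde R$; that is legitimate and more self-contained, but the ``direct but somewhat tedious verification'' you defer is precisely the content that Li's theorem packages, so you have not really bypassed it. The quotient step you add (showing $J$ is a vertex Poisson ideal) is correct and pleasant, but unnecessary in the paper's setup.

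\textbf{A small gap in your uniqueness argument.} After invoking \eqref{eq:axiom1-for-VPA} you assert that $a_{(n)}$ is determined for every $a$ in ``the differential subalgebra generated by $R$, which is all of $R_{\infty}$''. Axiom \eqref{eq:axiom1-for-VPA} only tells you $(T^{k}u)_{(n)}$ for $u\in R$; it says nothing about products in the first slot. To conclude uniqueness you must also invoke \eqref{eq:axiom2-for-VPA} (to swap arguments) together with \eqref{eq:axiom4-for-VPA}, exactly as you do in the existence half. This is easy to fix, but as written the sentence overstates what \eqref{eq:axiom1-for-VPA} alone gives.
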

\begin{proof}
Set
\begin{align}
u_{(n)}(T^l v)=\begin{cases}
		 \frac{l!}{(l-n)!}T^{l-n}\{u,v\}&\text{if }l\geq n,\\
0&\text{else}
		\end{cases}
\label{eq:VPA-structure0}
\end{align}
for $a,b\in R$.
This
% (\ref{eq:VPA-structure0})
extends to  a well-defined linear map
\begin{align}
R\ra \on{Der}(R_{\infty})[[z\inv]]z\inv,\quad
u\mapsto u_-(z)=\sum_{n\in \geq 0}u_{(n)}z^{-n-1},
\label{eq:1st}
\end{align}
where $\on{Der}(R_{\infty})$ is  the space of 
derivations on $R_{\infty}$.
It is straightforward to check that
\begin{align}
&[T,u_-(z)]=\partial_z u_-(z)\quad \text{for }u\in R,\nno \\
&u_{(n)}v=\sum_{j\geq 0}(-1)^{n+j+1}\frac{1}{j!}T^j(v_{(n+j)}u)
% &u_-(z)v=\on{Sing}(e^{zT}v_-(-z)u)
\quad\text{for }u,v\in R,\nno \\
&[u_{(m)},v_{(n)}]=\sum_{j\geq 0}\begin{pmatrix}
				  m\\j
				 \end{pmatrix}(u_{(j)}v)_{(m+n-j)}
\quad\text{for }u,v\in R.\label{eq:commutattor-ok}
\end{align}
 
The map (\ref{eq:1st})
 can be extended to the linear map
\begin{align}\label{eq:VPA-structure-on-the-way}
R_{\infty}\ra \on{Der}(R_{\infty})[[z\inv]]z\inv,\quad
a\mapsto a_-(z)=\sum_{n\geq 0}a_{(n)}z^{-n-1}
\end{align}
by setting
\begin{align*}
 a_-(z)T^ku=\on{Sing}(e^{zT}(-\partial_z)^k u_-(-z)a)
\end{align*}
for $a\in R_{\infty}$,
$u\in R$, $k\geq 0$.
Here $\on{Sing}(f)$ is the singular part of the formal series $f$
(\cite{Pri99,Li04}).
(Note that the axiom (\ref{eq:axiom2-for-VPA})
is equivalent to that 
$a_{-}(z)b=\on{Sing}(e^{z T}b_-(-z)a)$.)
%It is easy to see that the map (\ref{eq:VPA-structure-on-the-way})
% is well-defined.
We find from
the argument of \cite[Proof of Proposition 3.10]{Li04}
that
\begin{align*}
& (Ta)_-(z)=\partial_z a_-(z)\quad \text{for }a\in R_{\infty},\\
&a_-(z)b=\on{Sing}(e^{zT}b_-(-z)a)\quad\text{for }a,b\in R_{\infty}.
\end{align*}
This together
(\ref{eq:commutattor-ok}) proves
that (\ref{eq:VPA-structure-on-the-way})
defines a vertex Poisson structure on $R_{\infty}$
by \cite[Theorem 3.6]{Li04}.
The uniqueness statement is easily seen,
cf.\  \cite[Proof of Proposition 3.10]{Li04}.
\end{proof}

The vertex Poisson algebra structure 
on $R_{\infty}=\C[X_{\infty}]$
given in Proposition \ref{Pro:R-infty-is-v-poisson}
for an affine scheme $X=\Spec R$
will be  called the {\em level $0$ vertex Poisson algebra
structure}.
\begin{Rem}
The differential 
algebra  $R_{\infty}$ makes sense even when 
$R$ is not finitely generated and
 Proposition \ref{Pro:R-infty-is-v-poisson}
still holds in this case.
Also,
the assertion of Proposition \ref{Pro:R-infty-is-v-poisson}
extends straightforwardly to the Poisson superalgebra cases.
\end{Rem}
\subsection{Ideals of vertex Poisson algebras}
Let $\VA$ 
be a vertex Poisson algebra,
$M$ a $V$-module.
A {\em submodule}  of $M$
is a submodule $N$ of $M$ as an associative algebra
such that $a_{(n)}N\subset N$ for all $a\in \VA$,
$n\geq 0$.
An {\em ideal} of $\VA$ is a submodule $I$ of $\VA$ 
stable under the action of $T$.
By (\ref{eq:axiom2-for-VPA})
it follows that
$u_{(n)}\VA\subset I$ for all $u\in I$
and $n\geq 0$.
Hence 
$\VA/I$  inherits the vertex Poisson algebra
from $V$.

\begin{Lem}
 Let $R$ be a commutative $\Q$-algebra,
$\partial$ a derivation on $R$,
$I$ a $\partial$-stable ideal of $R$.
Then  the radical $\sqrt{I}$ of $I$ is  stable under the action 
of $\partial$.
\end{Lem}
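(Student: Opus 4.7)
The plan is to use the hypothesis that $\Q \subset R$ in order to divide by positive integers, and to prove the statement by a single inductive argument. Given $a \in \sqrt{I}$, pick the smallest $n \geq 1$ with $a^n \in I$; the goal is to produce a power $(\partial a)^N \in I$, which would place $\partial a$ in $\sqrt{I}$.

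The key claim I would establish by induction on $k \in \{1, \ldots, n\}$ is
\begin{align*}
a^{n-k} (\partial a)^{2k-1} \in I.
\end{align*}
For the base case $k=1$, apply $\partial$ to $a^n \in I$ (using that $I$ is $\partial$-stable) to obtain $n a^{n-1} \partial a \in I$, and divide by $n$, which is invertible in $R$. For the inductive step, assume $a^{n-k}(\partial a)^{2k-1} \in I$ with $k < n$. Differentiating gives
\begin{align*}
(n-k) a^{n-k-1}(\partial a)^{2k} + (2k-1) a^{n-k}(\partial a)^{2k-2}\partial^2 a \in I,
\end{align*}
and multiplying through by $\partial a$ yields
\begin{align*}
(n-k)\, a^{n-k-1}(\partial a)^{2k+1} + (2k-1)\, a^{n-k}(\partial a)^{2k-1} \partial^2 a \in I.
\end{align*}
The second summand already lies in $I$ by the induction hypothesis (it is a multiple of $a^{n-k}(\partial a)^{2k-1}$), so the first summand is in $I$; dividing by $n-k \neq 0$ gives $a^{n-(k+1)}(\partial a)^{2(k+1)-1} \in I$, completing the induction.

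Setting $k=n$ in the claim gives $(\partial a)^{2n-1} \in I$, whence $\partial a \in \sqrt{I}$, as required. The only subtle point is the use of the $\Q$-algebra hypothesis to invert the integers $n$ and $n-k$ that appear; everything else is routine manipulation of the Leibniz rule, so there is no real obstacle beyond keeping the bookkeeping straight.
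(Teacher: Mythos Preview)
Your proof is correct. The induction is clean, and the bookkeeping (dividing by $n$ and $n-k$, multiplying through by $\partial a$ before invoking the hypothesis) is handled properly. The choice of the \emph{smallest} $n$ is harmless but unnecessary---any $n$ with $a^n\in I$ would do.

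The paper takes a different and somewhat shorter route: rather than differentiating once per step, it applies $\partial^m$ to $a^m$ in one stroke. Expanding $\partial^m(a\cdots a)$ by the iterated Leibniz rule gives a sum of products $(\partial^{i_1}a)\cdots(\partial^{i_m}a)$ over all $(i_1,\dots,i_m)$ with $\sum i_j=m$; every term except the one with all $i_j=1$ contains a factor $\partial^0 a=a\in\sqrt{I}$, so modulo $\sqrt{I}$ only $m!(\partial a)^m$ survives. Since $\partial^m(a^m)\in I\subset\sqrt{I}$, one concludes $(\partial a)^m\in\sqrt{I}$ after dividing by $m!$. Thus the paper works modulo $\sqrt{I}$ throughout and lands $(\partial a)^m$ only in $\sqrt{I}$, whereas your argument stays inside $I$ and produces the sharper conclusion $(\partial a)^{2n-1}\in I$. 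The trade-off is that the paper's version is a one-line observation once the multinomial expansion is recognized, while yours is a self-contained induction that avoids any appeal to $\sqrt{I}$ being an ideal along the way.
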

\begin{proof}(\cite{CasOsb78})
Let $a\in \sqrt{I}$,
so that there exists $m\in \N$
such that $a^m\in I$.
Because $\partial I\subset I$,
$ (\partial)^m a^m \in I$.
%\begin{align*}
% (\partial)^m a^m \in I.
%\end{align*}
But \begin{align*}
      (\partial)^m a^m\equiv m! (\partial a)^m\pmod{\sqrt{I}},
    \end{align*}
and therefore
we get that $(\partial a)^m\in \sqrt{I}$.
This gives  $\partial a\in \sqrt{I}$.
\end{proof}

\begin{Co}\label{Co:radical-of-VPA-ideal}
 \begin{enumerate}
\item Let $R$ be a Poisson algebra,
$I$ a Poisson ideal of $R$.
Then  $\sqrt{I}$ is also a Poisson ideal.
\item Let $\VA$ be a vertex Poisson algebra,
$I$ a $\VA$-submodule of $\VA$.
Then $\sqrt{I}$ is also a $\VA$-submodule of $\VA$.
\item Let $\VA$ be a vertex Poisson algebra,
$I$ a vertex Poisson algebra ideal of $\VA$.
Then  $\sqrt{I}$ is also a vertex Poisson algebra ideal of $\VA$.
%In particular $\VA/\sqrt{I}$ 
%inherits the vertex Poisson algebra structure from $V$.
 \end{enumerate}
\end{Co}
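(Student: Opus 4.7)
The plan is to reduce each of the three parts to the preceding lemma by identifying, in each setting, a family of derivations of the underlying commutative algebra that preserves the ideal in question, and then applying the lemma to those derivations one at a time.

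For (i), a Poisson ideal $I\subset R$ is by definition a (commutative) ideal with $\{R,I\}\subset I$, which is the same as saying that for every $r\in R$ the Hamiltonian derivation $\{r,-\}\colon R\to R$ preserves $I$. The previous lemma then tells us that $\{r,-\}$ also preserves $\sqrt{I}$. Running $r$ over $R$ gives $\{R,\sqrt{I}\}\subset\sqrt{I}$, so $\sqrt{I}$ is a Poisson ideal.

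For (ii), the point is that the axiom \eqref{eq:axiom4-for-VPA} says precisely that for every $a\in V$ and every $n\geq 0$, the operator $a_{(n)}\colon V\to V$ is a derivation of the (commutative) multiplication of $V$. Since $I$ is a $V$-submodule, $a_{(n)}I\subset I$ for all such $a,n$, and the lemma gives $a_{(n)}\sqrt{I}\subset \sqrt{I}$ for all $a\in V$ and $n\geq 0$. Thus $\sqrt{I}$ is again a $V$-submodule. For (iii), by (ii) $\sqrt{I}$ is already a $V$-submodule, so it only remains to verify $T$-stability; but $T$ is a derivation of the commutative algebra $V$ (by axiom~(i) of a vertex algebra, $T=\omega$-independently, $T$ is a derivation of the $_{(-1)}$-product, hence of the commutative product in the commutative case), and $TI\subset I$ by hypothesis, so one more application of the lemma yields $T\sqrt{I}\subset\sqrt{I}$.

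In short, each statement follows by applying the lemma to an appropriate family of derivations. The only thing to check is that in each setting the operators that are supposed to preserve $I$ really are derivations of the commutative algebra structure of $R$ or $V$; this is either the definition of a Poisson algebra, the module axiom \eqref{eq:axiom4-for-VPA}, or the Leibniz property of $T$. There is no genuine obstacle beyond recording these facts, and the quantification over all such derivations is handled one at a time, with the $\mathbb{Q}$-algebra hypothesis of the lemma being automatic because we are working over $\mathbb{C}$.
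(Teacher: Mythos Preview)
Your argument is correct and is precisely the intended one: the paper states this as an immediate corollary of the preceding lemma, and your proof simply spells out that in each case the relevant operators ($\{r,-\}$, $a_{(n)}$ for $n\geq 0$, and $T$) are derivations of the underlying commutative algebra preserving $I$, so the lemma applies. The slightly garbled phrase ``$T=\omega$-independently'' should be cleaned up, but the mathematics is right.
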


\subsection{Li filtration}
Let $\VA$ be a  vertex algebra.
Following \cite{Li05}\footnote{In \cite{Li05}
$F^p V$ was denoted by $E_p$.},
define  $F^p\VA$ to be the
subspace of $\VA$
spanned by the vectors
\begin{align*}
 a^1_{(-n_1-1)}\dots a^r_{(-n_r-1)}b
\end{align*}
with $a^i\in \VA$,
$b\in \VA$
$n_i\in \Z_{\geq 0}$,
$n_1+\dots+n_r\geq p$.
Then 
\begin{align}
& \VA=F^0\VA\supset F^1\VA\supset \cdots,
\quad
\bigcap_p F^p V=0,\nno
\\&
T F^p\VA\subset F^{p+1}\VA, \nno\\
&
 a_{(n)}F^q \VA\subset F^{p+q-n-1}\VA\quad \text{for } a\in F^p \VA,\  n\in \Z,
\nno \\
&
 a_{(n)}F^q\VA\subset  F^{p+q-n}\VA\quad \text{for } 
 a\in
 F^p \VA,\ n\geq 0.
\label{eq:Li-fitration2}
\end{align}
Here we have set $F^p V=V $ for $p<0$.
Note that the filtration
$\{F^pV\}$ is independent of the grading of $V$.

Let $\gr^F\VA=\bigoplus_{p}F^p\VA/F^{p+1}\VA$
be the associated graded vector space.
The space $\gr^F \VA$ is  a vertex Poisson algebra
%with the multiplication
by
\begin{align*}
 &\sigma_p(a)\sigma_q(b)=\sigma_{p+q}(a_{(-1)}b)
\\
&T\sigma_p(a)=\sigma_{p+1}(Ta),\\
&
 Y_-(\sigma_p(a),z)\sigma_q(b)=\sum_{n\geq 0}\sigma_{p+q-n}(a_{(n)}b)z^{-n-1},
\end{align*}
where 
$\sigma_p: F^p \VA\ra F^p\VA/F^{p+1}\VA$ is the principal symbol map.

The filtration  $\{F^p \VA\}$ is called  the {\em Li filtration}
of $\VA$.

%{\em For the rest of the paper we will assume that
%$V$ satisfies that
%$\bigcap_p F^p V=0$.
%}
%The Li filtration is separated,
%that is,
%$\bigcap_p F^p V=0$,
%provided that  $V$ is lower truncated
% (\cite[Lemma 2.14]{Li05}).

We have  \cite[Lemma 2.9]{Li05}
\begin{align}
 F^p \VA=\haru_{\C}\{a_{(-i-1)}b;
a\in \VA,
i\geq 1, b\in F^{p-i}\VA\}\quad \text{for all $p\geq 1$.}
\label{eq:2009:04:02:10:39}
\end{align}
In particular
\begin{align*}
F^1 M=C_2(M).
%\label{eq:F1=C2}
\end{align*}
Set
\begin{align*}
 \Ring{\VA}=
\VA/C_2(\VA)=F^0\VA/F^1 \VA\subset \gr^F V.
\end{align*}
It is known by
Zhu \cite{Zhu96} 
that $\Ring{\VA}$ is a Poisson algebra.
In fact
the 
Poisson algebra structure
of $\Ring{\VA}$
can be understood as the
restriction of 
the vertex Poisson structure of $\gr^F V$
\cite[Proposition 3.5]{Li05};
it is given by
\begin{align*}
& \bar a\cdot \bar b=\overline{a_{(-1)}b},\\
&\{\bar a,\bar b\}=\overline{a_{(0)}b}
\end{align*}
for $a,b\in V$,
where $\bar a=a+C_2(V)$.

By \cite[Lemma 4.2]{Li05},
 the embedding $\Ring{\VA}\hookrightarrow \gr^F \VA$ 
extends to the surjective homomorphism
\begin{align}
(\Ring{\VA})_{\infty}\twoheadrightarrow \gr^F \VA
\label{eq:grV-is-a-quotient}
\end{align}
of differential algebras.

\begin{Pro}\label{Pro:maps-from-jet-scheme}
The surjection (\ref{eq:grV-is-a-quotient})
is a homomorphism of vertex Poisson algebras,
where $(\Ring{\VA})_{\infty}$ is equipped with the level $0$
vertex Poisson algebra structure.
\end{Pro}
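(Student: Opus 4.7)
The plan is to exploit the fact that \cite[Lemma 4.2]{Li05} already gives the surjective map $\phi\colon(\Ring{V})_{\infty}\twoheadrightarrow\gr^F V$ as a homomorphism of differential algebras, so the only thing left to verify is compatibility with the $Y_-$ operations, namely
\begin{align*}
\phi(u_{(n)}v)=\phi(u)_{(n)}\phi(v)
\qquad\text{for all }u,v\in(\Ring{V})_{\infty},\ n\geq 0.
\end{align*}

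First I would reduce this identity to the case $u,v\in\Ring{V}\subset(\Ring{V})_{\infty}$. Since $(\Ring{V})_{\infty}$ is generated as a differential algebra over $\C$ by $\Ring{V}$, and $\phi$ commutes with $T$ and with the commutative product, the Leibniz axiom \eqref{eq:axiom4-for-VPA} (together with skew symmetry \eqref{eq:axiom2-for-VPA} to propagate the Leibniz rule to the first argument) and the translation axiom \eqref{eq:axiom1-for-VPA} allow one to build up the identity for $(T^ju_1\cdots T^{j_r}u_r,\,T^kv_1\cdots T^{k_s}v_s)$ from the identity on $(u_i,v_j)$ with $u_i,v_j\in\Ring{V}$. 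This reduction is essentially the same bookkeeping as in the uniqueness part of Proposition~\ref{Pro:R-infty-is-v-poisson}.

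Next I would check the identity directly for $u=\bar a$, $v=\bar b$ with $a,b\in V$. By Proposition~\ref{Pro:R-infty-is-v-poisson}, the level~$0$ structure on $(\Ring{V})_{\infty}$ satisfies $u_{(0)}v=\{u,v\}$ and $u_{(n)}v=0$ for $n\geq 1$. On the other hand, regarding $u,v$ as elements of $F^0V/F^1V\subset\gr^F V$, the vertex Poisson structure on $\gr^F V$ is
\begin{align*}
\sigma_0(a)_{(n)}\sigma_0(b)=\sigma_{-n}(a_{(n)}b).
\end{align*}
For $n=0$ this is $\overline{a_{(0)}b}=\{\bar a,\bar b\}$ by the explicit formula for the Poisson bracket on $\Ring{V}$, matching $\phi(\{u,v\})$. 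For $n\geq 1$ the index $-n$ is negative and $\sigma_{-n}$ is the zero map (since $F^pV/F^{p+1}V=0$ for $p<0$ under the convention $F^pV=V$ for $p\leq 0$), so both sides vanish. Thus the $Y_-$ compatibility holds on generators, and by the reduction above it holds on all of $(\Ring{V})_{\infty}$.

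There is no genuine obstacle: the nontrivial content was already packaged into \cite[Lemma 4.2]{Li05} (surjectivity as differential algebras) and into the explicit formula for $Y_-$ on $\gr^F V$. The main point to be careful with is ensuring that the inductive extension using \eqref{eq:axiom1-for-VPA}, \eqref{eq:axiom2-for-VPA}, \eqref{eq:axiom4-for-VPA} is consistent, which amounts to repeating the uniqueness argument of Proposition~\ref{Pro:R-infty-is-v-poisson}.
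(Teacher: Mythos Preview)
Your proposal is correct and follows essentially the same approach as the paper: check that the two vertex Poisson structures agree on the generating subspace $\Ring{V}$ (namely $a_{(0)}b=\{a,b\}$ and $a_{(n)}b=0$ for $n\geq 1$), and then extend to all of $(\Ring{V})_{\infty}$. The only difference is packaging: the paper simply invokes \cite[Lemma 3.3]{Li04} for the extension step, whereas you sketch that reduction by hand using axioms \eqref{eq:axiom1-for-VPA}, \eqref{eq:axiom2-for-VPA}, \eqref{eq:axiom4-for-VPA}, which is precisely what that lemma encodes.
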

\begin{proof}
From the definition 
we see that the vertex Poisson algebra structure
coincides  
on the
generating subspace $\Ring{\VA}$,
that is,
for $a,b\in \Ring{\VA}$,
$a_{(0)}b=\{a,b\} $
and $a_{(n)}b=0$ for all $n>0$
in both $\gr V$
and $(\Ring{\VA})_{\infty}$.
But  then  \cite[Lemma 3.3]{Li04}
says 
that
the map $(\Ring{\VA})_{\infty}\ra \gr V$ 
must be  a vertex Poisson algebra homomorphism.
\end{proof}

\subsection{Standard filtration vs Li filtration}
There is 
an another 
 filtration $\{G_p \VA; p\in \frac{1}{r_0}\Z_{\geq
0}\}$
of $\VA$ called the {\em standard filtration},
which is an {\em increasing} filtration
defined also  by Li \cite{Li04}\footnote{In \cite{Li04}
it is assumed that $V$ is  $\Z_{\geq 0}$-graded and
$V_0=\C$,
but this condition can be easily relaxed,
as we can see
 from 
Proposition \ref{Pro:F=G}.}:
choose 
 a set $\{a^i ; i\in I\}$ of  homogeneous  strong generators 
of $V$.
Let 
 $G_p \VA$ 
be the linear subspace of $\VA$
spanned by the vectors 
\begin{align}
 a^{i_1}_{(-n_1)}\dots a^{i_r}_{(-n_r)}\1
\quad \text{satisfying}\quad 
\Delta_{a^{i_1}}+\dots +\Delta_{a^{i_r}}\leq p,
\label{eq:2009-04-26-04-26}
\end{align}
%satisfying 
%\begin{align*}
%\Delta_{a^{i_1}}+\dots +\Delta_{a^{i_r}}\equiv p
%\pmod{\Z} \quad\text{and}\quad \Delta_{a^{i_1}}+\dots
% +\Delta_{a^{i_r}}\leq p
%\end{align*}
with $r\geq 0$, $n_i\geq 1$.
Then
\begin{align}
%&0=G_{-1}\VA\subset G_0 \VA\subset G_1 \VA\subset \dots,\\
& G_p\VA\subset G_{q}\VA\text{ for }p<q,
\label{eq:standard-filtration1}\\
&
\VA=\bigcup_{p} G_p \VA,
%\quad \bigcap_{p} G_p \VA=0,
\\
&
T G_p \VA \subset G_p \VA,
\\
&a_{(n)}G_q \VA\subset G_{p+q}\VA,\quad \text{for }a\in G_p \VA,\ n\in
 \Z,
\label{eq:standard-filtration2}
\\
&a_{(n)}G_q \VA\subset G_{p+q-1}\VA,\quad \text{for }a\in G_p \VA,\ n\in
 \Z_{\geq 0}.
\label{eq:standard-filtration3}
\end{align}
It follows that
  $\gr^G \VA=\bigoplus\limits_{p\in \frac{1}{r_0}\Z_{\geq 0}}G_p \VA/G_{p-1}\VA$
is naturally a vertex Poisson algebra,
where we have set $G_p \VA=0$ for $p<0$.

By \cite[Thoerem 4.14]{Li04}
the standard filtration $\{G_p \VA\}$ is characterized 
as the finest increasing filtration of $V$ 
satisfying
(\ref{eq:standard-filtration1})-(\ref{eq:standard-filtration3})
such that
\begin{align}
 \VA_{\Delta}\subset G_{\Delta}\VA.
\label{eq:characterization-of-standard-filtration}
\end{align}
In particular, it is independent of the choice of a 
set of strong generators of $V$.

Both filtrations $\{F^p \VA\}$ and $\{G_p\VA\}$ are 
stable under the action of the Hamiltonian.
Let $F^p\VA_{\Delta}=\VA_{\Delta}\cap F^p \VA$,
$G_p \VA_{\Delta}=\VA_{\Delta}\cap G_p \VA$,
so that $F^p V=\bigoplus_{\Delta} F^pV_{\Delta}$,
$G_p V=\bigoplus_{\Delta}G_p V_{\Delta}$.

\begin{Pro}\label{Pro:F=G}
%Let $\VA$ be a  $\frac{1}{m}\Z_{\geq 0}$-graded vertex algebra
%for some $m\in \N$.
We have  \begin{align*}
F^p \VA_{\Delta}=G_{\Delta-p}\VA_{\Delta}
	 \end{align*}
for all $p$ and $\Delta$.
Moreover,
the linear isomorphism
\begin{align*}
\gr^F \VA\isomap\gr^G \VA
\end{align*}
is an isomorphism of
 vertex Poisson algebras.
\end{Pro}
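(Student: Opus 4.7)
The plan is to prove the identity $F^pV_\Delta=G_{\Delta-p}V_\Delta$ first, and then deduce that the induced linear isomorphism automatically intertwines the vertex Poisson operations because both structures are built from the same operations on $V$ with matching degree shifts.

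For the inclusion $G_{\Delta-p}V_\Delta\subseteq F^pV_\Delta$, I would take a homogeneous spanning element
$a^{i_1}_{(-n_1)}\cdots a^{i_r}_{(-n_r)}\1\in G_{\Delta-p}V_\Delta$ with $n_j\geq 1$ and $\sum_j\Delta_{a^{i_j}}\leq \Delta-p$, rewrite each factor as $a^{i_j}_{(-(n_j-1)-1)}$, and observe that this exhibits the element in the defining spanning set of $F^{\sum_j(n_j-1)}V$. The weight identity $\Delta=\sum_j(\Delta_{a^{i_j}}+n_j-1)$ forces $\sum_j(n_j-1)=\Delta-\sum_j\Delta_{a^{i_j}}\geq p$, so the element lies in $F^pV$.

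For the reverse inclusion, I would induct on $p$. The base case $p\leq 0$ is exactly the characterization \eqref{eq:characterization-of-standard-filtration} that $V_\Delta\subseteq G_\Delta V$. For the inductive step, use the description \eqref{eq:2009:04:02:10:39} that $F^pV$ is spanned by vectors $a_{(-i-1)}b$ with $i\geq 1$ and $b\in F^{p-i}V$; splitting $a$ and $b$ into homogeneous components (which is legitimate since both filtrations are $H$-stable), I may assume $a\in V_{\Delta_a}$ and $b\in F^{p-i}V_{\Delta_b}$ with $\Delta=\Delta_a+i+\Delta_b$. The inductive hypothesis gives $b\in G_{\Delta_b-p+i}V$, and then \eqref{eq:characterization-of-standard-filtration} together with \eqref{eq:standard-filtration2} yields $a_{(-i-1)}b\in G_{\Delta_a+\Delta_b-p+i}V=G_{\Delta-p}V$.

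Once the equality holds, the identity map on $V$ descends, degree by degree, to a linear bijection $\gr^F V\isomap\gr^G V$ sending $\sigma^F_p(a)\in F^pV_\Delta/F^{p+1}V_\Delta$ to $\sigma^G_{\Delta-p}(a)\in G_{\Delta-p}V_\Delta/G_{\Delta-p-1}V_\Delta$ for homogeneous $a\in V_\Delta$. To see this is a morphism of vertex Poisson algebras, I would check the three structural operations by tracking indices: for the commutative product $\sigma(a)\sigma(b)=\sigma(a_{(-1)}b)$, the $F$-side lives in bidegree $(p+q,\Delta_a+\Delta_b)$ while the $G$-side lives in bidegree $((\Delta_a-p)+(\Delta_b-q),\Delta_a+\Delta_b)$, and these match; for the translation operator $T$, the shift $F^p\to F^{p+1}$ corresponds to the neutral shift $G_{\Delta-p}\to G_{(\Delta+1)-(p+1)}$; for the bracket $\sigma(a)_{(n)}\sigma(b)=\sigma(a_{(n)}b)$ with $n\geq 0$, the $F$-index drops by $n$ from $p+q$ to $p+q-n$, while the $G$-index drops by $1$ (independently of $n$) as dictated by \eqref{eq:standard-filtration3}, and indeed $(\Delta_a+\Delta_b-n-1)-(p+q-n)=(\Delta_a-p)+(\Delta_b-q)-1$. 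So the operations agree on the nose.

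The only real obstacle is bookkeeping: because one filtration is decreasing and the other is increasing, one must be disciplined about which side carries which shift, and one must verify that the inductive step using \eqref{eq:2009:04:02:10:39} indeed reduces the Li index (via $i\geq 1$) so the induction is well-founded. Everything else is formal once the equality of filtration pieces in each weight component is in hand.
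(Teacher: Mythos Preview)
Your proof is correct and follows essentially the same approach as the paper: the $F^p V_\Delta \subseteq G_{\Delta-p}V_\Delta$ direction is identical (induction on $p$ via \eqref{eq:2009:04:02:10:39}), and your direct verification of $G_{\Delta-p}V_\Delta \subseteq F^p V_\Delta$ is just the paper's induction on $r$ unrolled into a single step. Your explicit bookkeeping for the vertex Poisson compatibility spells out what the paper dismisses as ``easily seen,'' but the content is the same.
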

\begin{proof}
The second assertion is easily seen from the first.
So let us prove the first assertion.
First, 
we have
%We show  the inclusion $F^p \VA_{\Delta}\subset
 %G_{\Delta-p}\VA_{\Delta}$
%by 
%If the vector of the form (\ref{eq:2009-04-26-04-26})
%has the conformal weight $\Delta$,
%then
%\begin{align}
% \Delta_{a^{i_1}}+\dots +\Delta_{a^{i_r}}
%+n_1+\dots +n_r-r=\Delta.
%\label{eq:weight-comparison}
%\end{align}
%Since $n_i\geq 1$,
% it follows that $\Delta_{a^{i_1}}+\dots +\Delta_{a^{i_r}}\leq \Delta$.
%This shows that
by (\ref{eq:characterization-of-standard-filtration})
$\VA_{\Delta}=G_{\Delta}\VA_{\Delta}$,
namely,
\begin{align*}
F^0 V_{\Delta}=G_{\Delta} V_{\Delta}.
\end{align*}
Next we show 
the inclusion 
$F^{p}\VA_{\Delta}\subset G_{\Delta-p}\VA_{\Delta}$
%We proceed 
by induction on $p\geq 0$.
Let
 $p>0$.
%assume that we have shown 
%that  $F^q\VA_{\Delta}\subset 
%G_{\Delta-q}\VA_{\Delta}$ for any $q<p$ and any $\Delta$.
By 
(\ref{eq:2009:04:02:10:39}),
 $F^p \VA_{\Delta}$
is spanned by the elements
\begin{align}
 a_{(-i-1)}b,
\quad \text{with }a\in \VA_{\Delta_a},\ 
b\in F^{p-i}\VA_{\Delta_b},\
i\geq 1, \
\Delta_a+\Delta_b+i=\Delta.
\label{eq:F=G-1}
\end{align}
%Take an element
%$a_{(-i-1)}b
%\in F^p \VA_{\Delta}$
%of the above form.
By 
the induction hypothesis
$F^{p-i}\VA_{\Delta_b}\subset G_{\Delta_b-p+i}\VA_{\Delta_b}$
for $i\geq 1$.
Because $a\in V_{\Delta_a}\subset G_{\Delta_a}V$,
for the vector $a_{(-i-1)}b$ of the  form
(\ref{eq:F=G-1})
we have
\begin{align*}
 a_{(-i-1)}b\in a_{(-i-1)}G_{\Delta_b-p+i}
\VA_{\Delta_b}
\subset G_{\Delta_a+\Delta_b-p+i}\VA_{\Delta}
=G_{\Delta-p}\VA_{\Delta}.
\end{align*}
Hence
$F^p \VA_{\Delta}\subset G_{\Delta-p}\VA_{\Delta}$.

It remains to show the opposite inclusion 
$G_p \VA_{\Delta}\subset F^{\Delta-p}\VA_{\Delta}$.
We prove 
that any element $v$ 
of $G_p \VA_{\Delta}$
of the form (\ref{eq:2009-04-26-04-26})
belongs to
$F^{\Delta-p}V_{\Delta}$
by induction on $r\geq 0$.
For $r=0$ this is obvious.
So let $r>0$.
Then 
$v=a^{i_1}_{(-n_1)}w$
with
$w=a^{i_2}_{(-n_2)}\dots a^{i_{r}}_{(-n_{r})}\1$,
$n_i\geq 1$,
$\sum \Delta_{a^i}\leq p$,
$\Delta_{a^{i_1}}+\Delta_w+n-1=\Delta$,
where each $a^i$ is homogeneous.
Because $w\in G_{p-\Delta_{a^{i_1}}}V$,
the induction hypothesis
gives that
$w\in F^{\Delta_{a^{i_1}}+\Delta_w-p}V$.
Hence
$a^{i_1}_{(-n)}w\in F^{\Delta_{a^1}+\Delta_w-p+n-1}\VA_{\Delta}
=F^{\Delta-p}\VA_{\Delta}$.
This completes the proof.
\end{proof}

From Proposition \ref{Pro:F=G}
we get the following well-known fact \cite{GabNei03}.
\begin{Co}\label{Co:finitely-strongly-genereted}
Let $\{a^i; i\in I\}$ be  a set of homogeneous vectors 
of $V$.
Then the following are equivalent:
\begin{enumerate}
 \item $\{a^i; i\in I\}$ strongly generates $V$.
\item $\{\bar a^i; i\in I\}$ generates $\Ring{\VA}$.
\end{enumerate}
%Let $V$ be a $\frac{1}{m}\Z_{\geq 0}$ vertex algebra
%with $m\in \N$.
In particular
$\VA$ is finitely strongly generated
if and only if
 $\Ring{\VA}$ is finitely generated.
\end{Co}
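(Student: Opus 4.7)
The plan is to leverage two structural facts established earlier: the surjection $(\Ring{\VA})_{\infty} \twoheadrightarrow \gr^F \VA$ of Proposition \ref{Pro:maps-from-jet-scheme}, which says that $\gr^F\VA$ is generated as a differential algebra by its degree-zero piece $\Ring{\VA}$; and the identification $F^p \VA_\Delta = G_{\Delta-p} \VA_\Delta$ of Proposition \ref{Pro:F=G}, which, combined with $G_q \VA = 0$ for $q < 0$, shows that the Li filtration on each conformal weight space terminates: $F^{\Delta+1} \VA_\Delta = 0$. This termination is what lets us run a downward induction.

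For (i) $\Rightarrow$ (ii), I would observe that the Li filtration properties give $a_{(-n)}\VA \subset F^{n-1}\VA$ for all $a \in \VA$ and $n \geq 1$, so $a_{(-n)}\VA \subset F^1\VA = C_2(\VA)$ whenever $n \geq 2$. Applied to any strong monomial $a^{i_1}_{(-n_1)}\cdots a^{i_r}_{(-n_r)}\1$, this kills every term having some $n_j \geq 2$ modulo $C_2(\VA)$; the surviving monomials (all $n_j = 1$) reduce to the commutative product $\bar a^{i_1}\cdots \bar a^{i_r}$ in $\Ring{\VA}$. Since such monomials span $\VA$, their images generate $\Ring{\VA}$ as a commutative algebra.

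For the converse (ii) $\Rightarrow$ (i), let $\VA' \subset \VA$ be the linear span of all monomials $a^{i_1}_{(-n_1)}\cdots a^{i_r}_{(-n_r)}\1$ with $n_j \geq 1$; the goal is $\VA = \VA'$. Since both sides are $H$-graded it suffices to handle each $\VA_\Delta$, and I would prove $F^p \VA_\Delta \subset \VA'$ by downward induction on $p$. The base case $p > \Delta$ is immediate from termination. For the inductive step, take $v \in F^p\VA_\Delta$ and consider $\sigma_p(v) \in F^p\VA/F^{p+1}\VA \subset \gr^F\VA$. Under the surjection from $(\Ring{\VA})_{\infty}$ and the assumption that $\{\bar a^i\}$ generates $\Ring{\VA}$, $\sigma_p(v)$ is a polynomial in $\{T^k \sigma_0(a^i)\}_{k\geq 0}$. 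Using $T^k \sigma_0(a^i) = \sigma_k(T^k a^i)$, the identity $(T^k a)_{(-1)} = k!\, a_{(-k-1)}$, and the product rule $\sigma_p(a)\sigma_q(b) = \sigma_{p+q}(a_{(-1)}b)$, each such monomial lifts (up to a nonzero scalar) to a vector of the form $a^{i_1}_{(-n_1)}\cdots a^{i_r}_{(-n_r)}\1 \in \VA'_\Delta$ with $\sum (n_j - 1) = p$. Subtracting this lift from $v$ produces an element of $F^{p+1}\VA_\Delta$, which lies in $\VA'$ by induction.

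The main obstacle is this lifting step in the converse direction: one has to respect both the Li-degree grading and the conformal weight grading simultaneously, and use the vertex Poisson structure of $\gr^F\VA$ (both the commutative product and the $T$-action) to recognize every homogeneous symbol as the principal symbol of an explicit $\VA'$-monomial of the correct weight. Everything else is a direct application of Propositions \ref{Pro:maps-from-jet-scheme} and \ref{Pro:F=G}. The final clause of the corollary, equating finite strong generation of $\VA$ with finite generation of $\Ring{\VA}$, follows at once from the equivalence together with the observation that the set of generators can be taken to be the same finite set on both sides.
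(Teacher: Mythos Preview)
Your proposal is correct and follows essentially the same route as the paper. For (i)$\Rightarrow$(ii) both arguments amount to observing that a strong monomial with some $n_j\geq 2$ lies in $F^1\VA=C_2(\VA)$; for (ii)$\Rightarrow$(i) both use the surjection \eqref{eq:grV-is-a-quotient} to see that the $\bar a^i$ generate $\gr^F\VA$ as a differential algebra, and then lift. The only difference is expository: the paper compresses your downward induction into the single phrase ``the principal symbol map gives the isomorphism $V\isomap\gr^F\VA$ of vector spaces,'' which is exactly the content of your termination-plus-induction argument on each $\VA_\Delta$.
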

\begin{proof}
Suppose that  $\{a^i; i\in I\}$ 
strongly generates  $V$.
By Proposition \ref{Pro:F=G}
$C_2(V)=F^1 V$ is spanned by the 
vectors of the form (\ref{eq:2009-04-26-04-26})
with $n_1+\dots +n_r-r\geq 1$,
proving that
$\{\bar a^i; i\in I\}$
 generates 
$\Ring{\VA}$.

Conversely, 
suppose that  $\{\bar a^i; i\in I\}$ 
generates $\Ring{V}$.
Then
by
 (\ref{eq:grV-is-a-quotient})
 $\{\bar a^i\}$ 
generates $\gr^F V$ as a differential algebra. 
As  the principal symbol map gives the 
isomorphism
$
V\isomap \gr^F V
$
of vector spaces,
it follows that
 $\{a^i\}$ strongly generates $V$.
\end{proof}

\subsection{Example: universal affine vertex algebras}
\label{Example:universal-afine}
Let $\fing$ be a simple Lie algebra over $\C$,
$\affg=\fing[t,t\inv]\+ \C K$ the Kac-Moody affinization of $\fing$
with the central element $K$.
For $k\in \C$,
define
\begin{align*}
 V^k(\fing)=U(\affg)\*_{U(\fing[t]\+ \C K)}\C_k,
\end{align*}
where $\C_k$ is the one-dimensional representation of
$\fing[t]\+ \C K$
on which $\fing[t]$ acts trivially and 
$K$ acts as the multiplication by $k$.

There is a unique vertex algebra structure in $V^k(\fing)$
such that $\1=1\* 1$ is the vacuum vector
and
\begin{align*}
 Y(x_{(-1)}\1,z)=x(z):=\sum_{n\in \Z}x_{(n)}z^{-n-1}\quad\text{for }x\in \fing,
\end{align*}
where $x_{(n)}=x\* t^n$.
The vertex algebra $V^k(\fing)$ is called
 the {\em universal affine vertex algebra
associated with $\fing$ at level $k$}
(see  \cite[4.7]{Kac98}, \cite[2.4.2]{FreBen04}).

The vertex algebra $V^k(\fing)$ is conformal
by the Sugawara construction, provided that
$k\ne -h\che$,
where $h\che$ is the dual Coxeter number of $\fing$.

\begin{Pro}\label{Pro:in-the-case-of-affine-vertex-algebra}
For any $k\in \C$ we have the following.

\begin{enumerate}
 \item$\Ring{V^k(\fing)}\cong \C[\fing^*]$ as Poisson algebras,
where $\fing^*$ is equipped with the Kirillov-Kostant Poisson structure.
 \item  $\gr^F V^k(\fing)\cong \C[\fing^*_{\infty}]$ as 
vertex Poisson algebras,
where 
$ \C[\fing^*_{\infty}]$ is equipped with the level
$0$ 
 vertex Poisson structure. %(Proposition \ref{Pro:R-infty-is-v-poisson}).
\end{enumerate}
In particular $\Ring{V^k(\fing)}$ and $\gr^F V^k(\fing)$ are
 independent of $k\in \C$.
\end{Pro}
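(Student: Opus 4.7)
The plan is to first construct a surjection $\C[\fing^*] \twoheadrightarrow R_{V^k(\fing)}$ of Poisson algebras, extend it via Proposition \ref{Pro:maps-from-jet-scheme} to a surjection $\C[\fing^*_{\infty}] \twoheadrightarrow \gr^F V^k(\fing)$ of vertex Poisson algebras, and then use PBW to compare graded characters and conclude both surjections are isomorphisms. Assertion (i) will fall out of (ii) by taking the $F^0/F^1$ piece.

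First, I would define the map $\phi: S(\fing) = \C[\fing^*] \to R_{V^k(\fing)}$ by sending $x \in \fing$ to $\bar x = x_{(-1)}\1 + C_2(V^k(\fing))$. This is well-defined as a map of commutative algebras because the commutator formula gives $[x_{(-1)}, y_{(-1)}]\1 = ([x,y])_{(-2)}\1 \in C_2(V^k(\fing))$, so the images of the basis elements of $\fing$ commute in $R_{V^k(\fing)}$. The map is Poisson: by the formula $\{\bar a, \bar b\} = \overline{a_{(0)}b}$ we compute $\{\bar x, \bar y\} = \overline{x_{(0)}y} = \overline{[x,y]}$, which is the Kirillov--Kostant bracket. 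Because $\{x_{(-1)}\1 \mid x \in \fing\}$ strongly generates $V^k(\fing)$, Corollary \ref{Co:finitely-strongly-genereted} shows that $\phi$ is surjective.

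Next, by Proposition \ref{Pro:maps-from-jet-scheme} together with the surjection $\phi$, we obtain a surjective homomorphism of vertex Poisson algebras
\begin{align*}
\Phi : \C[\fing^*_{\infty}] = (S(\fing))_{\infty} \twoheadrightarrow (R_{V^k(\fing)})_{\infty} \twoheadrightarrow \gr^F V^k(\fing),
\end{align*}
where the left side carries the level $0$ vertex Poisson structure. Both sides are graded by the Hamiltonian $H$ (on the left, $x^j_{(-i)}$ has weight $i$).

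The key step is the Hilbert series comparison. On the left, $\C[\fing^*_{\infty}]$ is a polynomial algebra on generators $x^j_{(-i)}$ ($i \geq 1$, $j = 1,\dots,\dim\fing$), so its graded character is $\prod_{i \geq 1}(1-q^i)^{-\dim\fing}$. On the right, the PBW theorem gives a vector-space basis of $V^k(\fing)$ by monomials $x^{i_1}_{(-n_1)}\cdots x^{i_r}_{(-n_r)}\1$ with $n_j \geq 1$, so the graded character of $V^k(\fing)$ is the same product, and by Proposition \ref{Pro:F=G} we have $\ch \gr^F V^k(\fing) = \ch V^k(\fing)$. Hence $\Phi$ is a surjection between graded vector spaces of equal (finite-dimensional) graded pieces, so it is an isomorphism, proving (ii). Assertion (i) then follows by passing to the $F^0/F^1$ component of $\gr^F V^k(\fing)$, which is $R_{V^k(\fing)}$, matched against the subalgebra $\C[\fing^*] \subset \C[\fing^*_{\infty}]$ generated by the weight-one generators $x^j_{(-1)}$.

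The only genuinely non-formal input is the PBW-based character comparison; everything else is bookkeeping with the Li filtration and the structural results already established in the paper.
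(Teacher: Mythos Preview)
Your proof is correct and arrives at the same conclusion as the paper, but the packaging is different.

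The paper argues structurally: it identifies the standard filtration $\{G_p V^k(\fing)\}$ with the Lie-theoretic PBW filtration on $U(\fing[t^{-1}]t^{-1})$ (via the vector-space isomorphism $V^k(\fing)\cong U(\fing[t^{-1}]t^{-1})$), obtaining $\gr^G V^k(\fing)\cong S(\fing[t^{-1}]t^{-1})=\C[\fing^*_\infty]$ as commutative rings; Proposition~\ref{Pro:F=G} then transports this to $\gr^F$. Only afterward does the paper check vertex Poisson compatibility on the generators $x\in\fing$ (citing \cite[Lemma~3.3]{Li04}). You instead front-load the vertex Poisson structure: the surjection $\Phi$ is a vertex Poisson map from the outset by Proposition~\ref{Pro:maps-from-jet-scheme}, and PBW enters only as a graded-dimension count to upgrade surjectivity to bijectivity. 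Both approaches rest on PBW; the paper uses it to match filtrations, you use it to match Hilbert series. Your route exploits the paper's own machinery more fully and avoids a separate compatibility check; the paper's route makes the identification of the Li/standard filtration with the PBW filtration explicit, which is of independent interest.

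One small point worth making explicit in your write-up: the first arrow in your composite $\Phi$, namely $(S(\fing))_\infty\to (R_{V^k(\fing)})_\infty$, is a vertex Poisson homomorphism because a Poisson surjection $A\twoheadrightarrow B$ induces a vertex Poisson surjection $A_\infty\twoheadrightarrow B_\infty$; this follows from the uniqueness clause in Proposition~\ref{Pro:R-infty-is-v-poisson}, but is not stated as such in the paper. Also, the equality $\ch\gr^F V^k(\fing)=\ch V^k(\fing)$ is more elementary than Proposition~\ref{Pro:F=G}: it holds for any separated, $H$-stable filtration with finite-dimensional weight spaces.
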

\begin{proof}
Although Proposition  is 
well-known (see e.g., \cite{DonLiMas02,De-Kac05,De-Kac06}),
we include the proof for completeness.

The vertex algebra $V^k(\fing)$ is naturally
$\Z_{\geq 0}$-graded (see e.g., \cite[Example 4.9b]{Kac98}).
We consider the corresponding standard filtration $\{G_p
 V^k(\fing)\}$.

We have  
$V^k(\fing)\cong U(\fing[t\inv]t\inv)$
as vectors spaces.
Under this isomorphism
$G_pV^k(\fing)$ gets identified with
$U_p(\fing[t\inv]t\inv)$,
where $\{U_p(\fing[t\inv]t\inv)\}$ is the standard filtration of the
 enveloping algebra $U(\fing[t\inv]t\inv)$ in the Lie theory,
that is,
$U_p(\fing[t\inv]t\inv)$ is
the linear span of products of at most $p$ elements
in $\fing[t\inv]t\inv$.
It follows that  $\gr^G V^k(\fing)\cong S(\fing[t\inv]t\inv)$ as commutative rings,
and thus $\gr^FV^k(\fing)\cong S(\fing[t\inv]t\inv)$ as 
commutative rings by
Proposition \ref{Pro:F=G}.

%Under the identification
%$\gr^F V^k(\fing)=S(\fing[t\inv]t\inv)$,
Now 
 $S(\fing[t\inv]t\inv)$ is naturally 
isomorphic to $S(\fing)_{\infty}
=\C[\fing^{*}_{\infty}]$ as commutative rings,
and 
we get the  isomorphism
\begin{align*}
\Phi:\C[\fing^*_{\infty}]%\isomap S(\fing[t\inv]t\inv)
\isomap \gr^F V^k(\fing),
\end{align*} 
which is easily seen to be 
 an isomorphism of differential algebras.
It remains  to prove  that
$\Phi$
%isomorphism
%\begin{align*}
%\Phi: \C[\fing^*_{\infty}]\isomap
%\gr^F V^k(\fing)
%\end{align*}
is a homomorphism of vertex Poisson algebras.
For this, it is 
sufficient to check that
$\Phi(x_{(n)})=\Phi(x)_{(n)}$ only for $x\in \fing$
by \cite[Lemma 3.3]{Li04}.
For $n>0$ this follows immediately from the definition,
and for $n=0$ this is equivalent to  (i) of Proposition,
which is easy to see.
\end{proof}

\section{Associated varieties of vertex algebras and their modules}
\label{section:associated-variety}
For the rest of the paper we will assume that
$V$ to be  finitely strongly generated.
Thus, in particular,
$\Ring{\VA}$ is finitely generated
(Corollary \ref{Co:finitely-strongly-genereted}).

\subsection{Filtration of
  $\VA$-modules}\label{subsection:filtration-of-modules}
Let $M$ be a $\VA$-module.
A {\em compatible filtration}
$\{\Gamma^p M\}$
of $M$ 
is a decreasing filtration
$ M=\Gamma^0M\supset \Gamma^1M\supset \cdots$
satisfying
\begin{align}
&
 a_{(n)}\Gamma^q M\subset \Gamma^{p+q-n-1}M,\quad \text{for }
a\in F^p \VA,\ n\in \Z,
\nno \\
&
 a_{(n)}\Gamma^qM\subset  \Gamma^{p+q-n}M\quad \text{for }
 a\in
F^p \VA,
\ n\geq 0,
\label{eq:Li-fitration2-module}\\
& H \Gamma^q M\subset \Gamma^q M\nno,
\end{align}
where $\{F^p \VA\}$ is the Li filtration of $\VA$.
The associated graded space
$\gr^\Gamma M=\bigoplus \Gamma^p M/\Gamma^{p+1}M$ is naturally a module over 
the vertex Poisson algebra $\gr^F\VA$.
Here we have set $\Gamma^pM=M$ for $p<0$.
Note that each
 subspace $\Gamma^p M/\Gamma^{p+1} M$
 of $\gr^{\Gamma}M$
 is a submodule over 
$\Ring{\VA}=\VA/C_2(\VA)$.

A compatible  filtration $\{\Gamma^p M\}$ is {\em good}\footnote{The definition
of a ``good'' filtration in 
this note  is different form the one given in
\cite{Li04}.}
 if 
it is separated (i.e. $\bigcap \Gamma^p M=0$)
 and 
$\gr^\Gamma M$ is finitely generated over 
the ring $\gr \VA$.

\begin{Rem}
Let $\{\Gamma^p M\}$ be a compatible filtration 
of a $V$-module $M$.
Then 
$\bigcap_p \Gamma^p M$ is a submodule of $M$.
Hence, if $M$ is simple,
we have  either $\bigcap \Gamma^p M=0$ or 
$\bigcap\Gamma^p M=M$.
\end{Rem}

\begin{Lem}\label{Lem:confomal-vs-(\VA,T)}
 Let $\VA$ be a conformal vertex algebra,
$M$ a $\VA$-module,
$\{\Gamma^p M\}$ a compatible filtration.  
Then 
$\gr^\Gamma M$  is a differential $\gr^F V$-module.
\end{Lem}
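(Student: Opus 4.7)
The plan is to exhibit the translation on $\gr^\Gamma M$ as the descent of the zero-mode $L_{-1}:=\omega_{(0)}^M$ coming from the conformal structure of $V$, and then to verify the differential-module axiom by passing to cosets.

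First I would set up the candidate. Because $V$ is conformal with conformal vector $\omega$, the module $M$ carries the operator $L_{-1}=\omega_{(0)}^M$, and the Borcherds commutator identity applied with $a=\omega$ at level $0$ reads
\[
[L_{-1},\,a_{(n)}^M] \;=\; (\omega_{(0)}a)_{(n)}^M \;=\; (Ta)_{(n)}^M \;=\; -n\,a_{(n-1)}^M
\]
for every $a\in V$ and every $n\in\Z$. So, already on $M$ itself, $L_{-1}$ translates the modes in precisely the way required. Next I would descend $L_{-1}$ to $\gr^\Gamma M$. Since $\omega$ lies trivially in $F^0V$, the tight compatibility bound at $n=0$ gives $L_{-1}\Gamma^qM\subset \Gamma^{0+q-0}M=\Gamma^qM$, so $L_{-1}$ preserves the filtration and induces a well-defined $\C$-linear operator $T$ on $\gr^\Gamma M$.

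Finally I would verify $[T,\bar a_{(n)}]=-n\,\bar a_{(n-1)}$ on $\gr^\Gamma M$. Picking representatives $a\in F^pV$ of $\bar a$ and $m\in\Gamma^qM$ of $\bar m$, the multiplicative and singular actions of $\gr^F V$ on $\gr^\Gamma M$ from Subsection~\ref{subsection:filtration-of-modules} are by construction the cosets of the corresponding actions of $V$ on $M$; hence
\[
[T,\bar a_{(n)}]\bar m \;=\; \overline{[L_{-1},a_{(n)}^M]\,m} \;=\; \overline{-n\,a_{(n-1)}^M m} \;=\; -n\,\bar a_{(n-1)}\bar m,
\]
which together with the preceding step shows that $\gr^\Gamma M$ carries the differential $\gr^F V$-module structure.

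The main point requiring care is the bookkeeping of filtration degrees in this last step: one must confirm that each of the cosets above is compared in the appropriate graded piece of $\gr^\Gamma M$, which rests on both the general bound $a_{(n)}^M\Gamma^qM\subset \Gamma^{p+q-n-1}M$ and the tight bound $a_{(n)}^M\Gamma^qM\subset \Gamma^{p+q-n}M$ for $n\geq 0$, together with $\omega\in F^0V$ giving the sharpest possible shift for $L_{-1}$.
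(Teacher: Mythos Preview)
Your approach coincides with the paper's: descend $L_{-1}=\omega_{(0)}^M$ to $\gr^\Gamma M$ and invoke $[L_{-1},a_{(n)}^M]=-na_{(n-1)}^M$ on $M$. The problem lies exactly where you said care was needed, and the bookkeeping does not close.

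With the filtration-preserving $T$ you define (namely $T\sigma_q(m)=\sigma_q(L_{-1}m)$, using $\omega\in F^0V$ and the tight bound at $n=0$), the two sides of $[T,\bar a_{(n)}]=-n\,\bar a_{(n-1)}$ land in \emph{different} graded pieces of $\gr^\Gamma M$ whenever $n\ne 0$. For $\bar a=\sigma_p(a)$, $\bar m=\sigma_q(m)$ and $n\le -1$, the operator $\bar a_{(n)}$ raises filtration degree by $p-n-1$ while $\bar a_{(n-1)}$ raises it by $p-n$; hence your degree-$0$ commutator gives
\[
[T,\bar a_{(n)}]\bar m=\sigma_{p+q-n-1}\bigl(-n\,a_{(n-1)}m\bigr),
\]
which vanishes because $a_{(n-1)}m\in\Gamma^{p+q-n}M$ by the general compatibility bound, whereas $-n\,\bar a_{(n-1)}\bar m=\sigma_{p+q-n}\bigl(-n\,a_{(n-1)}m\bigr)$ is generically nonzero. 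The same off-by-one mismatch occurs for $n\ge 1$. So your $T$ in fact commutes with every $\bar a_{(n)}$ and does \emph{not} satisfy the differential-module relation. For the relation to hold one needs $T$ to have degree $+1$ on $\gr^\Gamma M$, in parallel with $TF^pV\subset F^{p+1}V$ on the vertex Poisson algebra side; the inclusion $\omega_{(0)}\Gamma^qM\subset\Gamma^qM$ you used is one step too weak for that. (The paper's one-line proof invokes the same construction, with the index shift written in the opposite direction---evidently a slip---and is equally terse on this point.)
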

\begin{proof}
 Let $\omega$ be the Virasoro element of $\VA$.
Then the derivation $ \Gamma^p M/\Gamma^{p+1}\ra \Gamma^{p-1}/\Gamma^p M$,
$\sigma_p(m)\ra \sigma_{p-1}(\omega_{(0)}m)$,
defines 
an action of $T$ with 
 the desired property.
\end{proof}

Let $F^p M$ be a subspace of $M$
spanned by the vectors
\begin{align*}
 a^1_{(-n_1-1)}\dots a^r_{(-n_r-1)}m
\end{align*}
with $a^i\in \VA$,
$m\in M$,
$n_i\in \Z_{\geq 0}$,
$n_1+\dots+n_r\geq p$.
Then 
$\{F^p M\}$ is a compatible  filtration \cite{Li05}.
It is 
separated
because $M$ is lower truncated by our assumption,
see
(the proof of) \cite[Lemma 2.1.4]{Li05}.
The filtration $\{F^p M\}$
is  called the {\em Li filtration} of 
a $V$-module $M$.
Note that 
\begin{align}
F^1M =C_2(M).
\end{align}
The $\gr V$-module structure
of $\gr^F M$
gives a
$R_V$-module structure on
 $M/C_2(M)$:
\begin{align*}
 \bar a. \bar m= \overline{a_{(-1)}m},\quad
\text{for }a\in V,\ m\in M,
\end{align*}
where 
$\bar a=a+C_2(V)$,
$\bar m=m+C_2(M)$.
Note that   $M/C_2(M)$
is also a module over 
$\Ring{V}$ 
viewed as a Lie algebra
by the assignment $ \bar a \mapsto L_{\bar a}$,
where
\begin{align*}
 L_{\bar a} \bar m= \overline{a_{(0)}m},\quad
\text{for }a\in V,\ m\in M.
\end{align*}
These two actions are compatible in the sense that
\begin{align}
L_{ \bar a}\bar b. \bar m= 
{\{\bar a,\bar b\}}.\bar m+
\bar b. L_{\bar a}\bar m.
\label{eq:Poisson-modules}
\end{align}

\begin{Lem}[{\cite[Lemma 4.2]{Li05}}]\label{Lem:Li}
For a  $V$-module $M$,
the $\gr^F \VA$-module $\gr ^F M$ is generated by the 
subspace $M/C_2(M)=F^0 M/F^1 M$.
\end{Lem}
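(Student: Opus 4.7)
The plan is to prove the lemma by induction on $p \geq 0$, showing at each step that the graded piece $F^p M/F^{p+1}M$ is contained in the $\gr^F V$-submodule of $\gr^F M$ generated by $F^0 M/F^1 M = M/C_2(M)$.

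The first step is to establish the module analogue of \eqref{eq:2009:04:02:10:39}: for every $p\geq 1$,
\begin{align*}
F^p M=\haru_{\C}\{a_{(-i-1)}m : a\in V,\ i\geq 1,\ m\in F^{p-i}M\}.
\end{align*}
This follows directly from the definition of $F^p M$. A spanning vector $a^1_{(-n_1-1)}\cdots a^r_{(-n_r-1)}m$ with $n_1+\dots+n_r\geq p\geq 1$ must have at least one $n_j\geq 1$; moving that factor to the outside, the remaining product applied to $m$ lies in $F^{\sum_{k\neq j}n_k}M\subset F^{p-n_j}M$, giving a vector of the required form.

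The inductive base $p=0$ is trivial. For the inductive step, fix $p\geq 1$. By the first step, it is enough to consider a spanning vector $a_{(-i-1)}m$ with $a\in V$, $i\geq 1$, and $m\in F^{p-i}M$. Its principal symbol satisfies
\begin{align*}
\sigma_p(a_{(-i-1)}m)=\sigma_0(a)_{(-i-1)}\sigma_{p-i}(m),
\end{align*}
where the right-hand side uses the $\gr^F V$-module structure on $\gr^F M$; this identity is essentially the definition of that structure, since \eqref{eq:Li-fitration2-module} guarantees that $a_{(-i-1)}m\in F^{p}M$ and that its class modulo $F^{p+1}M$ depends only on $\sigma_0(a)$ and $\sigma_{p-i}(m)$. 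By the inductive hypothesis, $\sigma_{p-i}(m)$ lies in the submodule generated by $F^0M/F^1M$, and therefore so does $\sigma_p(a_{(-i-1)}m)$.

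The only nontrivial point is the rewriting in the first step, and even that amounts to bookkeeping on the defining spanning vectors of $F^p M$ together with the observation that the Li filtration is decreasing. Once that rewriting is in place, the induction is purely formal; the main subtlety to keep in mind is simply that $F^0 V/F^1 V=R_V$ is the level at which we find the generators $\sigma_0(a)$ used to raise the filtration degree, reflecting the fact that the ``new'' generators at each filtration level all come from lifts of elements of $R_V$ via the operators $a_{(-i-1)}$ with $i\geq 1$.
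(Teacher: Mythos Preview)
The paper does not supply its own proof of this lemma; it is quoted as \cite[Lemma 4.2]{Li05} and used as input. So there is no argument in the paper to compare against, and your proof is the standard one.

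Your overall strategy is correct, but the phrase ``moving that factor to the outside'' in the first step hides a nontrivial point. The operators $a^k_{(-n_k-1)}$ do not commute, so you cannot literally permute $a^j_{(-n_j-1)}$ to the front and leave the remaining product unchanged. What saves the argument is that the commutator
\[
[a_{(-m-1)},b_{(-n-1)}]=\sum_{l\geq 0}\binom{-m-1}{l}(a_{(l)}b)_{(-m-n-2-l)}
\]
is a sum of operators $c_{(-k-1)}$ with $k\geq m+n+1$, so each commutator term applied to the rest of the product already lies in the target span, by induction on the number $r$ of factors. Equivalently, argue by induction on $r$: if $n_1\geq 1$ the spanning vector is already of the required form; if $n_1=0$ then $w:=a^2_{(-n_2-1)}\cdots a^r_{(-n_r-1)}m\in F^pM$ lies in the target span by induction, and one checks from the commutator formula that $a^1_{(-1)}$ preserves that span. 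With this fix, the module analogue of \eqref{eq:2009:04:02:10:39} is established, and your inductive step is then correct as written.
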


\begin{Lem}\label{Lem:C2-space}
Let 
$\{a^i; i\in I\}$ be a set of strong generators of  $V$.
Then
\begin{align*}
C_2(M)=\haru_{\C}\{a^i_{(-n)}m; i\in I,\ n\geq 2,\
m\in M \}.
\end{align*}
 \end{Lem}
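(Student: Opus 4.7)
The inclusion $\supseteq$ is immediate. For the reverse, using the already-recalled identity $C_2(M)=\haru_{\C}\{v_{(-n)}m : v\in\VA,\ m\in M,\ n\geq 2\}$, it suffices to prove that $v_{(-n)}m$ lies in $N:=\haru_{\C}\{a^i_{(-n)}m : i\in I,\ n\geq 2,\ m\in M\}$ for every $v\in\VA$, $m\in M$, $n\geq 2$. By finite strong generation I may take $v=a^{i_1}_{(-n_1)}\cdots a^{i_s}_{(-n_s)}\1$ with $n_j\geq 1$, and I plan to induct on $s$. The base case $s=0$ is trivial since $\1_{(-n)}=0$ for $n\geq 2$; the case $s=1$ reduces to $a^i_{(-k)}\1=\frac{1}{(k-1)!}T^{k-1}a^i$, giving $(a^i_{(-k)}\1)_{(-n)}=\binom{n+k-2}{k-1}a^i_{(-n-k+1)}$ by iterating $(Tw)_{(l)}=-l\,w_{(l-1)}$, and $n+k-1\geq 2$.

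For the inductive step $s\geq 2$, write $v=a^{i_1}_{(-n_1)}b$ with $b$ of length $s-1$ and expand via the Borcherds iterate identity
\[
(a^{i_1}_{(-n_1)}b)_{(-n)}=\sum_{j\geq 0}(-1)^j\binom{-n_1}{j}\!\left(a^{i_1}_{(-n_1-j)}b_{(-n+j)}-(-1)^{-n_1}b_{(-n_1-n-j)}a^{i_1}_{(j)}\right).
\]
All terms in which $a^{i_1}$ appears with index $\leq -2$ lie in $N$ by the very definition of $N$. All terms in which $b$ appears with index $\leq -2$ lie in $N$ by the inductive hypothesis applied to appropriate $T^k b$ (whose length remains $\leq s-1$, since $T$ acts on a monomial of length $s-1$ by a sum of monomials of length $s-1$), combined with the reduction $w_{(-l)}m'=\frac{1}{l-1}(Tw)_{(-l+1)}m'$ which lets me trade any $l\geq 2$ for $l=2$.

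The only obstruction is the boundary term $a^{i_1}_{(-1)}b_{(-n)}m$, arising when $n_1=1$ and $j=0$. I resolve it by commuting:
\[
a^{i_1}_{(-1)}b_{(-n)}m = b_{(-n)}\bigl(a^{i_1}_{(-1)}m\bigr) + \sum_{r\geq 0}(-1)^r(a^{i_1}_{(r)}b)_{(-n-1-r)}m.
\]
The first summand is in $N$ by the IH on $b$. For the remaining terms, property \eqref{eq:standard-filtration3} places each $a^{i_1}_{(r)}b$ ($r\geq 0$) in a strictly smaller piece of the standard filtration $\{G_p\VA\}$ than $v$ itself, so a secondary outer induction on the standard filtration degree finishes the argument. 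The main technical difficulty is precisely this bookkeeping: the commutator step destroys naive control on monomial length, and that is why one is forced to run the induction against the standard filtration (whose compatibility with the Li filtration, ensured by Proposition \ref{Pro:F=G}, is exactly what makes both inductions line up consistently).
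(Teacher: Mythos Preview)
Your argument is correct, but it takes a substantially different route from the paper's.  The paper's proof is a three-line passage to the associated graded: since $\gr^{F}\VA$ is a quotient of the differential polynomial ring $(\Ring{\VA})_{\infty}$ generated by the symbols $\bar a^{i}$ (by \eqref{eq:grV-is-a-quotient} and Corollary~\ref{Co:finitely-strongly-genereted}), and since $\gr^{F}M$ is generated over $\gr^{F}\VA$ by $M/C_{2}(M)$ (Lemma~\ref{Lem:Li}), it follows immediately that the degree~$\geq 1$ part $\gr^{F}C_{2}(M)=\bigoplus_{p\geq 1}F^{p}M/F^{p+1}M$ is spanned by the symbols $\bar a^{i}_{(-n)}\bar m$ with $n\geq 2$; lifting back gives the result.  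All the commutator bookkeeping you carry out by hand is absorbed into the commutativity of $\gr^{F}\VA$.

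Your approach, by contrast, is a direct elementary computation with Borcherds identities and a double induction (outer on the standard-filtration degree, inner on monomial length).  This works and has the virtue of not invoking the vertex Poisson machinery, but it is longer and carries some dead weight: the detour through $T^{k}b$ and the reduction $w_{(-l)}m'=\frac{1}{l-1}(Tw)_{(-l+1)}m'$ is unnecessary, since the inner induction hypothesis already handles $b_{(-l)}m'$ directly for any $l\geq 2$; and the appeal to Proposition~\ref{Pro:F=G} is irrelevant, as only the standard-filtration property \eqref{eq:standard-filtration3} is used.  The essential content of your argument is precisely what the associated-graded picture encodes structurally.
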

\begin{proof}
Let 
$\{F^p C_2(V)\}$ be the induced filtration.
Then  we have
$\gr^F C_2(V)=\bigoplus_{p\geq 1}F^p M/F^{p+1}M$
and 
\begin{align*}
\gr^F C_2(M)=\haru_{\C}\{\bar a^i_{(-n)}\bar m;i\in I,n\geq 2,
m\in M\}
\end{align*}
by 
(\ref{eq:grV-is-a-quotient}),
Corollary \ref{Co:finitely-strongly-genereted},
and Lemma \ref{Lem:Li},
where $\bar m$ is the image of $m\in M$
in $\gr^F M$.
This proves the assertion.
\end{proof}

We call $M$ {\em finitely strongly generated over $V$}
if $M/C_2(M)$ is finitely generated over $\Ring{\VA}$.

\begin{Lem}
A $V$-module  $M$ is finitely strongly generated if and only if 
there exists a good filtration of $M$. 
\end{Lem}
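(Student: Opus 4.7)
The plan is to prove each implication separately, with the Li filtration $\{F^pM\}$ playing the central role.

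For the forward direction, assume $M/C_2(M)$ is finitely generated over $\Ring{V}$; I will verify that the Li filtration itself is a good filtration of $M$. Compatibility with the Li filtration of $V$ holds by construction, and separatedness $\bigcap_p F^pM=0$ follows from lower truncation, as in \cite[Lemma~2.1.4]{Li05}. For finite generation of $\gr^F M$ over $\gr^F V$, I combine Lemma~\ref{Lem:Li}---which asserts that $\gr^F M$ is generated over $\gr^F V$ by the degree-zero component $F^0M/F^1M=M/C_2(M)$---with the hypothesis that $M/C_2(M)$ is finitely generated over $\Ring{V}\subset\gr^F V$, so the same generators work over $\gr^F V$ a fortiori.

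For the converse, suppose $\{\Gamma^pM\}$ is a good filtration, and take weight-homogeneous generators $\bar m_1,\dots,\bar m_k$ of $\gr^\Gamma M$ over $\gr^F V$, with lifts $m_i\in\Gamma^{p_i}M\subset M$ of Hamiltonian weight $d_i$. I claim the images $[m_i]_F$ of $m_i$ in $M/C_2(M)$ generate it as an $\Ring{V}$-module. As a preliminary step, since $V$ is finitely strongly generated each weight space $V_\Delta$ (and hence $(\gr^F V)_\Delta$) is finite-dimensional, so each weight space $(\gr^\Gamma M)_d=\sum_i(\gr^F V)_{d-d_i}\bar m_i$ is finite-dimensional; combined with separatedness this forces $\Gamma^pM_d=0$ for $p$ sufficiently large (depending on $d$), and in particular each $M_d$ is finite-dimensional.

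Given $m\in M_d$, I iteratively construct approximations: starting with $m^{(0)}=m$, at step $q$ with $m^{(q)}\in\Gamma^qM_d$ I decompose $\overline{m^{(q)}}=\sum_i\bar u^{(q)}_i\bar m_i$ in $\Gamma^qM/\Gamma^{q+1}M$ with $\bar u^{(q)}_i\in(\gr^F V)_{q-p_i}$, lift to $u^{(q)}_i\in F^{q-p_i}V$, and set $m^{(q+1)}=m^{(q)}-\sum_i(u^{(q)}_i)_{(-1)}m_i\in\Gamma^{q+1}M$. The crucial filtration-arithmetic point is that for $q>p_i$ one has $(u^{(q)}_i)_{(-1)}m_i\in F^{q-p_i}M\subset F^1M=C_2(M)$ by (\ref{eq:Li-fitration2-module}), so this term vanishes modulo $C_2(M)$; only the terms with $q=p_i$ contribute, yielding $\bar u^{(p_i)}_i\cdot[m_i]_F\in\Ring{V}\cdot[m_i]_F$. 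Since $\Gamma^QM_d=0$ for $Q$ large, the iteration terminates after finitely many steps on each weight space, and telescoping gives $[m]_F=\sum_i\bar u^{(p_i)}_i\cdot[m_i]_F$, establishing finite generation.

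The main obstacle is the backward direction: reconciling an arbitrary good filtration $\Gamma$ with the specific quotient by $C_2(M)=F^1M$. The two key ingredients that make this work are the finite-dimensionality of weight spaces of $M$ (a consequence of finite strong generation of $V$ combined with the good filtration hypothesis, via the grading argument on $\gr^\Gamma M$) and the Li-filtration bound ensuring that contributions from $\gr^F V$-elements of positive Li degree always land in $C_2(M)$.
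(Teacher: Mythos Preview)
Your forward direction matches the paper's exactly. For the converse, your strategy---iterate down the good filtration, track the Li degree of the coefficients, and observe that only the $q=p_i$ terms survive modulo $C_2(M)$---is an explicit unrolling of the paper's argument. The paper proceeds more briskly: from the fact that the lifts $v_i$ span $M$ via monomials $a^1_{(-n_1)}\cdots a^s_{(-n_s)}v_i$ it concludes that the images of the $v_i$ generate $\gr^F M$; since those images sit in Li degree~$0$, the degree-zero piece $M/C_2(M)$ is then generated over $R_V=(\gr^F V)_0$. Your bookkeeping makes the same mechanism visible at the level of individual terms.

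There is, however, one incorrect claim in your termination argument. Finite strong generation of $V$ does \emph{not} imply that each weight space $V_\Delta$ is finite-dimensional: if some strong generator has conformal weight zero (e.g.\ $V=\C[x]$ viewed as a commutative vertex algebra with $\Delta_x=0$), then $V_0$ is infinite-dimensional. So you cannot deduce $\dim(\gr^\Gamma M)_d<\infty$ or $\dim M_d<\infty$ this way. The termination you actually need, namely $\Gamma^qM_d=0$ for $q\gg 0$, follows instead from the bi-grading on $\gr^F V$: since $V$ is non-negatively graded, the spanning vectors of $F^pV$ all have weight $\geq p$, so $F^pV_\Delta=0$ for $p>\Delta$ (equivalently, by Proposition~\ref{Pro:F=G}, $F^pV_\Delta=G_{\Delta-p}V_\Delta=0$). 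Hence $(\gr^F V)_{q-p_i,\,d-d_i}=0$ once $q>d-d_i+p_i$, so $(\Gamma^qM/\Gamma^{q+1}M)_d=\sum_i(\gr^F V)_{q-p_i,\,d-d_i}\cdot\bar m_i=0$ for all $q>\max_i(d-d_i+p_i)$, and separatedness then forces $\Gamma^qM_d=0$ for such $q$. With this correction in place, the remainder of your argument is correct.
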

\begin{proof}
Suppose that 
$M/C_2(M)$ is finitely generated over $\Ring{\VA}$.
Then
the Li filtration of $M$ is good by Lemma \ref{Lem:Li}.

Let us show the opposite direction.
 Suppose  that there exists a good filtration
$\{\Gamma^p M\}$ of $M$, so that
 $\gr^\Gamma M$ is  finitely generated over $\gr^F V$.
Let $v_1,\dots, v_r$
elements of $M$
such that there images
$\bar v_1,\dots,\bar v_r$
generate  $\gr^\Gamma M$ as a $\gr^F V$-module.
Then it follows that $M$ is spanned by the vectors
of the form
$
 (T^{j_1}a_1)\dots (T^{j_r}a_r)  v_i
$
with $a_s\in V$, $j_s\in \Z_{\geq 0}$,
$s=1,\dots,r$.
This means that 
the images of $v_i$'s generate $\gr^F M$ as well,
and the assertion follows.
\end{proof}

\begin{Lem}\label{Lem:C1-cofinite-implies-fg}
Suppose that 
$V$ is conformal %, $\Q_{\geq 0}$-graded
%$V=\bigoplus_{\Delta\in \Q_{\geq 0}}V_{\Delta}$
and $V_0=\C \1$.
Then
for a $V$-module $M$,
%$M/C_2(M)$ is finitely generated over $\Ring{\VA}$
%if and only if $M$ is $C_1$-cofinite.
%In particular
$M$ is finitely strongly generated over $V$
if and only if $M$ is $C_1$-cofinite.
\end{Lem}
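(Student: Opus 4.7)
My plan is to apply graded Nakayama's lemma to the graded $R_V$-module $P = M/C_2(M)$, after setting up two preliminary identifications that translate the Nakayama conclusion into the statement about $C_1$-cofiniteness.

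\textbf{Setup.} I would first observe that under the hypothesis $V_0 = \C\1$ one has $V_0 \cap C_2(V) = 0$, so $R_V$ is a $\tfrac{1}{r_0}\Z_{\geq 0}$-graded commutative algebra with $R_{V,0} = \C\bar\1$. Moreover $P = M/C_2(M)$ is a graded $R_V$-module (via $\bar a \cdot \bar m = \overline{a_{(-1)}m}$) which is bounded below in conformal weight, since $M$ is lower truncated. In this situation graded Nakayama gives: $P$ is finitely generated as an $R_V$-module if and only if $P/R_{V,>0}P$ is finite-dimensional. By the definition of finitely strong generation, the left-hand condition is exactly what we want.

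\textbf{Key identification.} The second step is to identify $P/R_{V,>0}P$ with $M/C_1(M)$. Pulling back to $M$, $R_{V,>0}P$ lifts to $V_{>0}\cdot_{(-1)}M + C_2(M)$. Using the identity $a_{(-n)}m = \tfrac{1}{(n-1)!}(T^{n-1}a)_{(-1)}m$ (which follows from $(Ta)_{(k)}=-ka_{(k-1)}$) together with $T^{n-1}a \in V_{>0}$ for $a \in V_{>0}$ and $\1_{(-n)}=0$ for $n\geq 2$, one checks that $C_2(M) \subseteq V_{>0}\cdot_{(-1)}M$. Hence the preimage of $R_{V,>0}P$ is just $V_{>0}\cdot_{(-1)}M$, and so $P/R_{V,>0}P \cong M/V_{>0}\cdot_{(-1)}M$. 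Combined with the inclusion $\omega_{(0)}M \subseteq V_{>0}\cdot_{(-1)}M$ discussed below, this gives $P/R_{V,>0}P \cong M/C_1(M)$, which together with the first step yields both directions of the lemma.

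\textbf{Main obstacle.} The inclusion $\omega_{(0)}M \subseteq V_{>0}\cdot_{(-1)}M$ is the most delicate point. On $V$ itself it is immediate because $\omega_{(0)}v = Tv$ and $Tv = (Tv)_{(-1)}\1 \in V_{>0}\cdot_{(-1)}V$ (as $Tv \in V_{>0}$ whenever $v$ is nonzero, by non-negativity of the grading). To transfer this to $M$, I would use the commutator identity $[\omega_{(0)}, a_{(-n)}] = n a_{(-n-1)}$ (which lands in $C_2(M) \subseteq V_{>0}\cdot_{(-1)}M$ for $n\geq 1$), and express a general element of $M$ via the $V$-module action on a chosen set of module generators. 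The computation $\omega_{(0)}(a_{(-n)}m') = a_{(-n)}\omega_{(0)}m' + na_{(-n-1)}m'$ shows that if $\omega_{(0)}m' \in V_{>0}\cdot_{(-1)}M$ then $\omega_{(0)}(a_{(-n)}m')\in V_{>0}\cdot_{(-1)}M$ as well, reducing the verification to a set of generators. The hard step is thus to verify the inclusion on those generators, which is the crux of the argument and where I expect the real work to sit.
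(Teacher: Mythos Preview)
Your approach is essentially identical to the paper's: both reduce to graded Nakayama after identifying $M/C_1(M)$ with $(M/C_2(M))\big/R_V^*\cdot(M/C_2(M))$, where $R_V^*=\bigoplus_{\Delta>0}(R_V)_\Delta$. The paper simply asserts this identification (``$\ker\eta=R_V^*\cdot(M/C_2(M))$'') and stops, while you correctly isolate the inclusion $\omega_{(0)}M\subseteq V_{>0}\cdot_{(-1)}M$ as the non-obvious step still to be checked.

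That step, however, cannot be completed: it is false in general. Take $V=Vir^c$ and $M$ a Verma module with highest weight vector $v$. Since $(Vir^c)_1=0$, every element of $V_{>0}\cdot_{(-1)}M$ and of $C_2(M)$ has conformal weight at least $h_v+2$, whereas $\omega_{(0)}v=L_{-1}v$ is nonzero of weight $h_v+1$. Hence $L_{-1}v\in C_1(M)\setminus\bigl(V_{>0}\cdot_{(-1)}M+C_2(M)\bigr)$ and $\ker\eta\neq R_V^*\cdot(M/C_2(M))$. In fact this Verma module is $C_1$-cofinite ($M/C_1(M)=\C v$) but not finitely strongly generated: $M/C_2(M)\cong\bigoplus_{b\geq 0}\C[\bar\omega]\cdot L_{-1}^b v$ is free of infinite rank over $R_V=\C[\bar\omega]$. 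So the implication ``$C_1$-cofinite $\Rightarrow$ finitely strongly generated'' fails with the paper's stated definition of $C_1(M)$; your inductive reduction to generators cannot rescue it, because on the highest weight vector itself the inclusion already breaks. The intended statement almost certainly uses the convention $C_1(M)=\operatorname{span}\{a_{(-1)}m:a\in V_{>0},\,m\in M\}$ (without the $\omega_{(0)}$ term), under which your ``main obstacle'' evaporates and both your argument and the paper's go through verbatim. The forward implication (finitely strongly generated $\Rightarrow$ $C_1$-cofinite) is fine either way, since it only needs $R_V^*\cdot(M/C_2(M))\subseteq\ker\eta$.
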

\begin{proof}
Observe that the grading on $V$ induces a grading on $\Ring{\VA}$
such that
\begin{align}
 \Ring{\VA}=\bigoplus_{\Delta\in \frac{1}{r_0}\Z_{\geq 0}}(\Ring{\VA})_{\Delta},
\quad
(\Ring{\VA})_0=\C.
\label{eq:conical}
\end{align}

The inclusion $C_2(M)\hookrightarrow C_1(M)$
gives the surjection 
\begin{align*}
\eta: M/C_2(M)\twoheadrightarrow M/C_1(M).
\end{align*}
As easily seen $\eta$ is a homomorphism of 
$\Ring{\VA}$-modules, where
$M/C_1(M)$ is considered as a trivial $\Ring{\VA}$-module,
and we have
$\ker \eta= \Ring{\VA}^* \left( M/C_2(M)\right)$,
where
 $\Ring{\VA}^*$ is the argumentation ideal of $\Ring{\VA}$:
\begin{align}
 \Ring{\VA}^*=\bigoplus_{\Delta>0}(\Ring{\VA})_{\Delta}.
\end{align}
\end{proof}

\subsection{Associated varieties of  vertex algebras
and their modules}
\begin{Lem}\label{Lem:annihilator-is-Poisson}
Let $M$ be a $V$-module.
 \begin{enumerate}
\item The annihilator $\Ann_{\Ring{\VA}}(M/C_2(M))$
of $M/C_2(M)$ in $\Ring{\VA}$ is a Poisson ideal of $\Ring{\VA}$.
  \item Let
$\{\Gamma^p M\}$ be  a compatible filtration.
Then %annihilator 
$\Ann_{(\Ring{\VA})_{\infty}}(\gr^{\Gamma }M)$
%of $\gr^{\Gamma}V$ in $(\Ring{\VA})_{\infty}$ 
is a 
$(\Ring{\VA})_{\infty}$-submodule of $(\Ring{\VA})_{\infty}$.
If  $V$ is conformal, then 
$\Ann_{(\Ring{\VA})_{\infty}}
(\gr^{\Gamma }M)$ is a vertex Poisson algebra ideal of 
$(\Ring{\VA})_{\infty}$.
 \end{enumerate}
\end{Lem}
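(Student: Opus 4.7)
The plan is to exploit the Poisson-module compatibility that has already been set up in the paper. For part (i), recall that $M/C_2(M)$ carries a commutative $\Ring{V}$-module structure $\bar a \cdot \bar m = \overline{a_{(-1)} m}$ together with the Lie action $L_{\bar a}\bar m = \overline{a_{(0)} m}$, and these satisfy the Leibniz compatibility \eqref{eq:Poisson-modules}; in short, $M/C_2(M)$ is a Poisson module. Set $I = \Ann_{\Ring{V}}(M/C_2(M))$. It is clearly a commutative ideal. To see it is a Poisson ideal, take $\bar a\in I$ and $\bar b\in \Ring{V}$ and apply \eqref{eq:Poisson-modules} with the roles of $\bar a$ and $\bar b$ swapped:
\[
L_{\bar b}(\bar a\cdot \bar m) \;=\; \{\bar b,\bar a\}\cdot \bar m + \bar a\cdot L_{\bar b}\bar m.
\]
Since $\bar a\in I$, the LHS and the final summand on the RHS both vanish, so $\{\bar b,\bar a\}\in I$ and, by antisymmetry of the bracket, $\{\bar a,\bar b\}\in I$.

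For part (ii), by Proposition \ref{Pro:maps-from-jet-scheme} the $\gr^F V$-action on $\gr^\Gamma M$ pulls back along the surjection $(\Ring{V})_\infty \twoheadrightarrow \gr^F V$ to a $(\Ring{V})_\infty$-module structure. Write $J = \Ann_{(\Ring{V})_\infty}(\gr^\Gamma M)$. Commutativity of $(\Ring{V})_\infty$ makes $J$ an associative ideal automatically. Stability under the operators $a_{(n)}$ with $n\geq 0$ is a direct application of the module axiom \eqref{eq:module-for-vertex-Poisson}: for $u\in J$, $a\in (\Ring{V})_\infty$, and $m\in \gr^\Gamma M$,
\[
(a_{(n)} u)\cdot m \;=\; a_{(n)}^M(u\cdot m) - u\cdot (a_{(n)}^M m) \;=\; 0,
\]
both terms vanishing since $u$ annihilates $\gr^\Gamma M$. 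This proves $J$ is a $(\Ring{V})_\infty$-submodule.

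Finally, when $V$ is conformal, Lemma \ref{Lem:confomal-vs-(\VA,T)} says $\gr^\Gamma M$ is a differential $\gr^F V$-module, hence a differential $(\Ring{V})_\infty$-module. The case $n=-1$ of the commutation relation $[T,a_{(n)}^M]=-n\,a_{(n-1)}^M$ reads $[T,a_{(-1)}^M]=(Ta)_{(-1)}^M$, which says that $T$ is a derivation of the associative action of $\Ring{V}$ on $\gr^\Gamma M$. Since $(\Ring{V})_\infty$ is generated as a differential algebra by $\Ring{V}$, Leibniz propagates this to $T(u\cdot m) = (Tu)\cdot m + u\cdot Tm$ for all $u\in (\Ring{V})_\infty$. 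For $u\in J$ both $T(u\cdot m)=0$ and $u\cdot Tm=0$, so $(Tu)\cdot m=0$ and thus $Tu\in J$.

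The only mildly delicate point is that last step: propagating the Leibniz identity from the generating subspace $\Ring{V}$ to all of $(\Ring{V})_\infty$. This is, however, a routine induction on the number of $T$'s and multiplications involved, using that $T$ is itself a derivation on the differential algebra $(\Ring{V})_\infty$.
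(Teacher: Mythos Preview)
Your proof is correct and follows essentially the same approach as the paper: part (i) via the Poisson-module identity \eqref{eq:Poisson-modules}, part (ii) via the module axiom \eqref{eq:module-for-vertex-Poisson} (the paper phrases this as \eqref{eq:axiom4-for-VPA} on $\gr^F V$ and pulls back through $\Phi$, but the content is identical), and the $T$-stability from the differential module structure of Lemma \ref{Lem:confomal-vs-(\VA,T)}.

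One small simplification: your final ``mildly delicate'' induction is unnecessary. The differential module condition $[T,a_{(n)}^M]=-n\,a_{(n-1)}^M$ holds for \emph{every} $a$ in the vertex Poisson algebra $(\Ring{V})_\infty$ (not just for $a\in \Ring{V}$), so taking $n=-1$ gives $T(u\cdot m)=(Tu)\cdot m+u\cdot Tm$ for all $u\in(\Ring{V})_\infty$ immediately, with no propagation needed.
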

\begin{proof}
 (i) follows from
\eqref{eq:Poisson-modules}.
(ii)
By (\ref{eq:axiom4-for-VPA}),
$\Ann_{\gr^F V}(\gr^{\Gamma}M)$ is a submodule
of $\gr^F V$.
Because  $(\Ring{\VA})_{\infty}$ acts on $\gr^{\Gamma}M$
via the surjective homomorphism 
$\Phi:(\Ring{\VA})_{\infty}\ra \gr^F V$ of vertex Poisson algebras,
%by Proposition \ref{Pro:maps-from-jet-scheme},
 $\Ann_{(R_V)_{\infty}}(\gr^{\Gamma} M)
=\Phi^{-1}(\Ann_{\gr^F
 V}(\gr^{\Gamma}M))$
is a submodule of $(R_V)_{\infty}$.
If $V$ is conformal then  $\gr^{\Gamma}M$ is a differential 
$\gr^F V$-module by Lemma \ref{Lem:confomal-vs-(\VA,T)}.
It follows that
$\Ann_{\gr^F V}(\gr^{\Gamma}M)$ is 
an ideal and hence so is
  $\Ann_{(R_V)_{\infty}}(\gr^{\Gamma} M)$.
\end{proof}

 Define the {\em associated variety} $\Ass{\VA}$ of 
$\VA$
by
\begin{align*}
 \Ass{\VA}=\Spec \Ring{\VA}.
\end{align*}
More generally, for a finitely strongly generated  $\VA$-module $M$,
 define  the associated  variety  $\Ass{M}$ 
of  $M$ by %is the following closed subvariety of  $\Ass{\VA}$:
\begin{align*}
 \Ass{M}%=\Spec (\Ring{\VA}/\Ann_{\Ring{\VA}(M/C_2(M))})\\
&=\on{supp}_{\Ring{\VA}}(M/C_2(M))\\
%=\{\p\in \Spec \Ring{\VA}; M_\p\ne 0\}\\
&=\{\p\in \Spec \Ring{\VA};
\p\supset \Ann_{\Ring{\VA}}(M/C_2(M))\}.
\end{align*}
%$\Ass{M}=\{\p\in \Spec \Ring{\VA};
%\p\supset \Ann_{\Ring{\VA}}(M/C_2(M))\}$
%(because $\Ring{\VA}$ is finitely generated 
%in our setting, see e.g.\ \cite[Corollary 2.7]{Eis95}),
%and 
 By Lemma \ref{Lem:annihilator-is-Poisson},
$\Ass{M}$ is a Poisson subvariety of $\Ass{\VA}$.
%It is the
%zero set of 
%$\Ann_{\Ring{\VA}}(M/C_2(M))$.

%Set
%\begin{align*}
%J_{M/C_2(M)}= \sqrt{\Ann_{\Ring{\VA}}(M/C_2(M))}.
%\end{align*}
%Then 
%$X_M$ is the zero set of $J_{M/C_2(M)}$.
%Note that 
%$J_{M/C_2(M)}$ is a Poisson ideal of 
%$\Ring{\VA}$ (Proposition \ref{Pro:radical-of-VPA-ideal}).

The following assertion is clear.
\begin{Lem}\label{Lem:C2-vs-zero-assiciated-variety}
% Let $\VA$ be a finitely strongly generated,
%$\Q_{\geq 0}$-graded vertex algebra such that 
%$\VA_0=\C$,
Let $M$ be a finitely strongly generated  $\VA$-module.
Then the following are equivalent:
\begin{enumerate}
 \item $M$ is $C_2$-cofinite.
\item $\dim \Ass{M}=0$.
\end{enumerate}
\end{Lem}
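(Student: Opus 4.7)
The plan is to reduce to a standard fact from commutative algebra. By Corollary \ref{Co:finitely-strongly-genereted} the ring $\Ring{\VA}$ is a finitely generated commutative $\C$-algebra, hence Noetherian; and the hypothesis that $M$ is finitely strongly generated over $V$ means precisely that $M/C_2(M)$ is a finitely generated $\Ring{\VA}$-module. Thus everything is set up to apply a general module-theoretic principle, and no genuine vertex-algebraic input remains.

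The principle I would invoke is: for a finitely generated module $N$ over a finitely generated commutative $\C$-algebra $R$, one has $\dim_{\C}N<\infty$ if and only if $\dim\on{supp}_{R}(N)=0$. For the forward direction, if $N$ is finite-dimensional then $R/\Ann_{R}(N)$ acts faithfully on $N$ and so embeds into the finite-dimensional algebra $\End_{\C}(N)$; hence it is Artinian and its spectrum, which coincides with $\on{supp}_{R}(N)$, consists of finitely many maximal ideals. Conversely, if $\on{supp}_{R}(N)$ is zero-dimensional, then by the Nullstellensatz $R/\sqrt{\Ann_{R}(N)}$ is a finite product of copies of $\C$, hence $R/\Ann_{R}(N)$ is Artinian, and the finitely generated module $N$ over this Artinian ring has finite length, so finite $\C$-dimension.

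Applying this with $R=\Ring{\VA}$ and $N=M/C_2(M)$ yields the equivalence, since by definition $\Ass{M}=\on{supp}_{\Ring{\VA}}(M/C_2(M))$ and $M$ is $C_2$-cofinite if and only if $\dim_{\C}M/C_2(M)<\infty$. There is no real obstacle here; the only conceptual content is the identification of $\Ring{\VA}$ as a finitely generated $\C$-algebra and of $M/C_2(M)$ as a finitely generated $\Ring{\VA}$-module, both of which are already in place.
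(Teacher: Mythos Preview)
Your argument is correct. In the paper this lemma is simply declared ``clear'' with no proof given, and your proposal spells out precisely the standard commutative-algebra reasoning that underlies such a declaration: $\Ring{\VA}$ is a finitely generated $\C$-algebra by Corollary~\ref{Co:finitely-strongly-genereted}, $M/C_2(M)$ is by hypothesis a finitely generated $\Ring{\VA}$-module, and then finite $\C$-dimension is equivalent to zero-dimensional support. There is nothing to compare in terms of approach; you have just made explicit what the paper leaves implicit.
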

%Hence $\VA$ is $C_2$-cofinite if and only if $\dim \Ass{\VA}=0$. 
\begin{Rem}\label{Rem:if-graded}
Let $V$ be as in Lemma \ref{Lem:C1-cofinite-implies-fg}.
Then by (\ref{eq:conical}) 
the variety $\Ass{\VA}$ is conical.
If $M$ is a graded $\VA$-module then 
$\Ass{M}$ is also conical.
Hence
%$\dim \Ass{M}=0$ if and only if 
\begin{align*}
\text{$M$ is $C_2$-cofinite}\quad \iff \quad \Ass{M}=\{0\}
\quad\text{(as topological spaces).}
\end{align*}
%$0$ denotes the point corresponding to the 
%argumentation ideal
%$\Ring{\VA}^*$.
This is equivalent to that,
for any homogeneous element $a\in V$ with  $\Delta_a>0$,
 $a+C_2(V)$ 
acts nilpotently on $ M/C_2(M)$.
\end{Rem} 

\subsection{Singular support of $\VA$-modules}
Let 
$M$ be a finitely strongly generated  $\VA$-module,
$\{\Gamma^p M\}$ a good filtration of $M$.
Define the {\em singular support} $SS(M)$
of $M$
by
\begin{align*}
 SS(M)
&=\on{supp}_{(\Ring{\VA})_{\infty}}(\gr^{\Gamma} M)\\
%&=\{\p\in\ \Spec (\Ring{\VA})_{\infty}; (\gr^\Gamma M)_{\p}\ne 0\}\\
&=
\{p\in \Spec (\Ring{\VA})_{\infty};
\p \supset \Ann_{(\Ring{\VA})_{\infty}}(\gr^\Gamma M)\}.
\end{align*}
%where
%$\Ann(\gr^\Gamma M)=\{a\in \Ring{\VA}_{\infty};
%am=0\text{ for all }m\in \gr^\Gamma M\}$,
Then  $SS(M)$ is a closed subscheme of the
infinite jet scheme $(\Ass{\VA})_{\infty}=\Spec (\Ring{\VA})_{\infty}$.
It is well-known that
 $SS(M)$ is independent of the choice of 
a  good filtration of $M$.

Let 
\begin{align*}
\pi_m: (\Ass{\VA})_{\infty}\ra (\Ass{\VA})_m
\end{align*}
be 
the natural projection.
%We have 
%\begin{align}
% X_V=\pi_0(SS(V)).
%\end{align}

\begin{Lem}\label{Lem:C2-variety-vs-singular-support}
Let $M$ be a finitely strongly generated $V$-module.
\begin{enumerate}
 \item We have 
 $\Ass{M}=\pi_0(SS(M))$.
\item If $V$ is conformal then 
$SS(M)\subset (\Ass{M})_{\infty}$.
\end{enumerate}
\end{Lem}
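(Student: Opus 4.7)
The plan is to reduce both containments in (i) to statements about annihilators and to compare the $\Ring{V}$-module structure on $M/C_2(M)$ with the $(\Ring{V})_{\infty}$-module structure on $\gr^F M$. I take the Li filtration as the good filtration (legitimate since $SS(M)$ is independent of the choice of good filtration), and set $I := \Ann_{\Ring{V}}(M/C_2(M))$ and $J := \Ann_{(\Ring{V})_{\infty}}(\gr^F M)$. The projection $\pi_0$ is dual to the inclusion $\Ring{V} \hookrightarrow (\Ring{V})_{\infty}$.

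For (i), the key intermediate identity is $J \cap \Ring{V} = I$. The inclusion $\subset$ is immediate since $M/C_2(M)$ sits inside $\gr^F M$ as its degree-$0$ component. For $\supset$, Lemma \ref{Lem:Li} together with the surjection of Proposition \ref{Pro:maps-from-jet-scheme} gives that $\gr^F M$ is generated by $M/C_2(M)$ as an $(\Ring{V})_{\infty}$-module, and commutativity of $(\Ring{V})_{\infty}$ lets any $a \in \Ring{V}$ annihilating $M/C_2(M)$ commute past arbitrary elements of $(\Ring{V})_{\infty}$ and thus annihilate all of $\gr^F M$. This identity yields the easy containment $\pi_0(SS(M)) \subset \Ass{M}$. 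For the reverse, given $\mf{q} \in \Ass{M}$, I would consider the fiber $k(\mf{q}) \otimes_{\Ring{V}} \gr^F M$; since this fiber is graded with degree-$0$ component $k(\mf{q}) \otimes_{\Ring{V}} M/C_2(M)$, Nakayama applied to the finitely generated module $M/C_2(M)$ over $(\Ring{V})_{\mf{q}}$ ensures it is nonzero. A prime in the support of this fiber then lifts to a prime $\p \in SS(M)$ with $\p \cap \Ring{V} = \mf{q}$, giving $\pi_0(\p) = \mf{q}$.

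For (ii), the conformal hypothesis activates Lemma \ref{Lem:confomal-vs-(\VA,T)}, promoting $\gr^F M$ to a differential $\gr^F V$-module. Lemma \ref{Lem:annihilator-is-Poisson}(ii) then upgrades $J$ to a $T$-stable ideal of $(\Ring{V})_{\infty}$. By (i) we already have $J \supset I$, so $T$-stability yields $J \supset T^j I$ for every $j \geq 0$. Inspecting the explicit presentation \eqref{eq:R-infty} of the jet algebra, the differential ideal generated by $I$ in $(\Ring{V})_{\infty}$ is precisely the defining ideal of $(\Ass{M})_{\infty}$ as a closed subscheme of $(\Ass{V})_{\infty}$, whence $SS(M) \subset (\Ass{M})_{\infty}$.

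I expect the main obstacle to be the reverse inclusion in (i): the easy direction and the whole of (ii) reduce to straightforward manipulations of ideals, but producing an actual prime of $SS(M)$ over a given $\mf{q} \in \Ass{M}$ requires the fiberwise nonvanishing argument above rather than formal ideal bookkeeping.
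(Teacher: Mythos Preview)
Your approach is correct and essentially matches the paper's: both take the Li filtration, establish $J\cap R_V=I$ via Lemma~\ref{Lem:Li}, and for (ii) invoke the $T$-stability of $J$ furnished by Lemma~\ref{Lem:annihilator-is-Poisson} to conclude that $J$ contains the defining ideal of $(X_M)_\infty$. The one difference is that the paper reads $\pi_0(SS(M))$ as the scheme-theoretic image (equivalently, the closure of the set-theoretic image), so the identity $J\cap R_V=I$ already yields (i) immediately; your Nakayama/fiber argument is correct and proves the stronger fact that $\pi_0|_{SS(M)}$ is surjective onto $X_M$ as sets, but this is not needed for the statement as the paper intends it.
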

\begin{proof}
We may take 
 the Li filtration $\{F^p M\}$ as a good filtration.
By Lemma \ref{Lem:Li},
\begin{align}
\Ann_{\gr^F \VA}(\gr^F M)
\cap \Ring{\VA}=\Ann_{\Ring{\VA}}
(M/C_2(M))
\label{eq:Ass-vs-SS}
\end{align}Hence $X_M=\pi_0(SS(M))$.
Next assume that $V$ is conformal.
By
Lemma \ref{Lem:annihilator-is-Poisson}
$\Ann_{(\Ring{\VA})_{\infty}}(\gr^F M)$
is  $T$-stable.
Thus from
(\ref{eq:Ass-vs-SS})
it follows that
$\Ann_{\gr^F \VA}(\gr^F M)$
contains the 
defining ideal of $(X_M)_{\infty}$,
that is,
 the
minimal $T$-stable ideal of 
$\C[(X_V)_{\infty}]$  containing
$\Ann_{\Ring{\VA}}(M/C_2(M))$.
This shows that  $SS(M)\subset (X_M)_{\infty}$.
\end{proof}

A $V$-module $M$ 
 is called {\em lisse} 
if $M$ is finitely strongly generated  and $SS(M)$ is zero-dimensional,
or equivalently,
$\pi_m(SS(M))$ is zero-dimensional  for all $m\geq 0$.
A vertex algebra $V$ is called  lisse if it is lisse as
a module over itself.

\begin{Rem}
  Suppose that
$\VA$ is $\Q_{\geq 0}$-graded 
and $\VA_0=\C$.
Then as in Remark \ref{Rem:if-graded}
we see that
 a $C_1$-cofinite  $V$-module
$M$ is lisse if and only if 
$SS(M)=\{0\}$.
or 
equivalently,
for  any homogeneous element $a$ with 
$\Delta_a>0$,
its principal symbol $\sigma_p(a)$
acts nilpotently on  in $\gr^{F} M$.
\end{Rem}
\begin{Th}\label{Th:C2-vs-lisse}
The following are equivalent.
\begin{enumerate}
 \item $\VA$ is $C_2$-cofinite.
%or equivalently,
%$\dim\Ass{\VA}=0$.
\item $\VA$ is lisse.
\end{enumerate}
\end{Th}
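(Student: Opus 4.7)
The plan is to prove the two implications separately, using the results already established about $\gr^F V$ and jet schemes.

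The direction (ii) $\Rightarrow$ (i) is immediate from Lemma \ref{Lem:C2-variety-vs-singular-support}(i), which gives $\Ass{V} = \pi_0(SS(V))$. Since $\pi_0$ is continuous, if $SS(V)$ is zero-dimensional then so is its image $\Ass{V}$, and Lemma \ref{Lem:C2-vs-zero-assiciated-variety} then yields $C_2$-cofiniteness.

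For (i) $\Rightarrow$ (ii), my starting point is Proposition \ref{Pro:maps-from-jet-scheme}: the surjection $(\Ring{V})_\infty \twoheadrightarrow \gr^F V$ realizes $SS(V) = \Spec\gr^F V$ as a closed subscheme of $(\Ass{V})_\infty = \Spec(\Ring{V})_\infty$. It therefore suffices to show that $(\Ass{V})_\infty$ itself is zero-dimensional as a topological space whenever $V$ is $C_2$-cofinite. Under that hypothesis $\Ring{V}$ is a finite-dimensional $\C$-algebra; let $N$ be its nilradical. Then $\Ring{V}/N$ is reduced, finite-dimensional, and, since $\C$ is algebraically closed, isomorphic to a finite product $\C^m$. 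A reduced $\C$-point carries no nontrivial jet, so a direct check gives $(\Ring{V}/N)_\infty \cong \Ring{V}/N$, which is zero-dimensional.

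The key step is to prove that the kernel $\langle N\rangle$ of the induced surjection $(\Ring{V})_\infty \twoheadrightarrow (\Ring{V}/N)_\infty$ --- i.e.\ the differential ideal of $(\Ring{V})_\infty$ generated by $N \subseteq \Ring{V}$ --- is contained in the nilradical $\sqrt{0}$ of $(\Ring{V})_\infty$. Each element of $N$ is nilpotent in $\Ring{V}$ hence in $(\Ring{V})_\infty$, and so lies in $\sqrt{0}$. By the lemma preceding Corollary \ref{Co:radical-of-VPA-ideal} (the radical of a $T$-stable ideal is $T$-stable), $\sqrt{0}$ is $T$-stable, so all iterated derivatives $T^k N$ lie in $\sqrt{0}$; this forces $\langle N\rangle \subseteq \sqrt{0}$. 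Consequently $(\Ring{V})_\infty/\sqrt{0}$ is a quotient of $(\Ring{V})_\infty/\langle N\rangle = \Ring{V}/N \cong \C^m$, so $\Spec(\Ring{V})_\infty$ is topologically a finite set of points. A fortiori $SS(V)$ is zero-dimensional, and $V$ is lisse.

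The main obstacle I anticipate is the failure of the naive slogan ``the jet scheme of a zero-dimensional scheme is zero-dimensional'' when the scheme is non-reduced: already for $R = \C[x]/(x^2)$ the truncated jet algebras $R_m$ involve new variables $x_{(-i)}$ whose nilpotence does not follow from $x^2=0$ alone but only from its $T$-derivatives. Descending to $\Ass{V,\mathrm{red}}$ and then invoking the $T$-stability of the nilradical is precisely what repairs this gap, and that $T$-stability (the Cassidy--Osborn lemma cited in the text) is the crux of the argument.
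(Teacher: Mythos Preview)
Your argument is correct and follows the same route as the paper's own proof: (ii)$\Rightarrow$(i) via $\Ass{V}=\pi_0(SS(V))$, and (i)$\Rightarrow$(ii) via the inclusion $SS(V)\subset (\Ass{V})_\infty$ together with the fact that the infinite jet scheme of a zero-dimensional scheme is zero-dimensional. The paper states the latter fact without proof; your contribution is to actually justify it in the possibly non-reduced situation, and your method --- passing to $\Ring{V}/N$ and invoking the $T$-stability of the nilradical (the Cassidy--Osborn lemma just before Corollary~\ref{Co:radical-of-VPA-ideal}) --- is exactly the right one and is consistent with the tools the paper has set up.

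One small sharpening: rather than appealing to Proposition~\ref{Pro:maps-from-jet-scheme} for the inclusion $SS(V)\subset (\Ass{V})_\infty$, you only need the differential-algebra surjection \eqref{eq:grV-is-a-quotient}; the vertex Poisson structure plays no role here. Since $\gr^F V$ is a ring with unit, its annihilator over $(\Ring{V})_\infty$ is precisely the kernel of that surjection, so $SS(V)=\Spec\gr^F V$ sits inside $\Spec(\Ring{V})_\infty=(\Ass{V})_\infty$ automatically, with no conformality hypothesis needed. This is why Theorem~\ref{Th:C2-vs-lisse} holds for $V$ itself without assuming $V$ conformal, whereas the module version (Theorem~\ref{Th:C2-vs-lisse-modules}) does require it.
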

\begin{proof}
 The direction (ii) $\Rightarrow$ (i) 
immediately follows from 
 Lemma \ref{Lem:C2-variety-vs-singular-support}.
 The direction (i) $\Rightarrow$ (ii)  follows from the fact that
$SS(V)\subset (\Ass{\VA})_{\infty}$
and the jet scheme of $0$-dimensional variety is $0$-dimensional.
\end{proof}

%\begin{Co}[\cite{GabNei03}]
%For a vertex algebra $V$ the following are equivalent.
% \begin{enumerate}
%  \item $V$ is $C_2$-cofinite.
%\item $V$ is $C_m$-cofinite for all $m\geq 2$.
% \end{enumerate}
%\end{Co}
%\begin{proof}
%The direction (ii) $\Rightarrow$ (i) is trivial.
%To see the opposite direction
%we use the fact that
%$C_m(\VA)\subset F^p\VA$ for $p\gg 0$ (\cite[Theorem 3.5]{Li05}).
%Suppose that 
%$V$ is $C_2$-cofintie.
%Then in particular $V$ is finitely strongly generated.
%By Proposition \ref{Pro:C2-vs-lisse}
%$V/F^p M$ is finite-dimensional for all $p\geq 0$,
%because $(T^n a)V\subset F^p V$ if 
%$n>p$.
 %\end{proof}

The same assertion holds for the modules
provided that 
$\gr^{\Gamma} M$ is a differential $\gr V$-module
for a good grading $\{\Gamma^p M\}$.
This is the case when $V$ is conformal:
\begin{Th}\label{Th:C2-vs-lisse-modules}
Suppose that $V$ is conformal 
and $M$ is a finitely strongly generated $V$-module.
Then the following are equivalent.
\begin{enumerate}
 \item $M$ is $C_2$-cofinite,
or equivalently, 
$\dim \Ass{M}=0$.
\item $M$ is lisse.
\end{enumerate}
\end{Th}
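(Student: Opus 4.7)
The plan is to mimic the proof of Theorem \ref{Th:C2-vs-lisse} essentially verbatim, since both implications reduce to already-established facts once we have the chain $SS(M) \subset (X_M)_\infty$ and $X_M = \pi_0(SS(M))$ from Lemma \ref{Lem:C2-variety-vs-singular-support}. The role of the conformal hypothesis is precisely to guarantee the first of these inclusions.

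For the direction (ii) $\Rightarrow$ (i), I would argue that if $M$ is lisse then $SS(M)$ is zero-dimensional by definition, so $X_M = \pi_0(SS(M))$ is also zero-dimensional by Lemma \ref{Lem:C2-variety-vs-singular-support}(i). Then $M$ is $C_2$-cofinite by Lemma \ref{Lem:C2-vs-zero-assiciated-variety}. Note this direction does not even use that $V$ is conformal.

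For the direction (i) $\Rightarrow$ (ii), I would first invoke Lemma \ref{Lem:C2-vs-zero-assiciated-variety} to convert $C_2$-cofiniteness of $M$ into the statement $\dim X_M = 0$. The conformal hypothesis enters here: by Lemma \ref{Lem:C2-variety-vs-singular-support}(ii), conformality guarantees $SS(M) \subset (X_M)_\infty$. The remaining ingredient is the fact, already used in the proof of Theorem \ref{Th:C2-vs-lisse}, that the infinite jet scheme of a zero-dimensional scheme of finite type is zero-dimensional; combining these gives $\dim SS(M) = 0$, i.e., $M$ is lisse.

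The only subtle point, and the reason one needs conformality rather than just finite strong generation of $V$, is in the inclusion $SS(M) \subset (X_M)_\infty$: this requires $\Ann_{(\Ring{\VA})_{\infty}}(\gr^F M)$ to be a genuine vertex Poisson ideal (in particular, $T$-stable), not merely a $(\Ring{\VA})_{\infty}$-submodule. This $T$-stability is exactly what Lemma \ref{Lem:confomal-vs-(\VA,T)} (and consequently Lemma \ref{Lem:annihilator-is-Poisson}(ii)) supplies, and it is the only obstacle; once it is in hand, the proof is a direct transcription of Theorem \ref{Th:C2-vs-lisse}.
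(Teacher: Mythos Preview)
Your proposal is correct and follows essentially the same route as the paper: the paper's own proof consists of the single sentence that, by Lemma~\ref{Lem:confomal-vs-(\VA,T)}, the argument of Theorem~\ref{Th:C2-vs-lisse} goes through verbatim, and your write-up is precisely an unpacking of that sentence via Lemmas~\ref{Lem:C2-vs-zero-assiciated-variety} and~\ref{Lem:C2-variety-vs-singular-support}. Your identification of where conformality is actually used (to obtain $T$-stability of the annihilator and hence the inclusion $SS(M)\subset (X_M)_{\infty}$) matches the paper's reasoning exactly.
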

\begin{proof}
%Again 
%the direction (ii) $\Rightarrow$ (i)  is
%immediate.
%To see  the opposite direction,
%we use  the condition that $V$
%is conformal  to ensure that
%$\Ann_{(\Ring{\VA})_{\infty}}(\gr^F M)$
%is  $T$-stable
%(Lemma \ref{Lem:annihilator-is-Poisson}).
%Because $\Ann_{\Ring{\VA}}(M/C_2(M))
%\subset \Ann_{(\Ring{\VA})_{\infty}}(\gr^F M)$
%by Lemma \ref{Lem:Li},
%$\Ann_{(\Ring{\VA})_{\infty}}(\gr^F M)$ 
%contains 
%the defining ideal 
%of $(\Ass{M})_{\infty}$,
%that is,
%the minimal $T$-stable ideal
% of $(\Ring{\VA})_{\infty}$
%which contains
%$\Ann_{\Ring{\VA}}(M/C_2(M))$.
%Therefore, 
%$SS(M)$ is the closed subscheme of
%$(\Ass{M})_{\infty}$.
By Lemma \ref{Lem:confomal-vs-(\VA,T)}
the assertion follows from 
in the same manner as
 Theorem \ref{Th:C2-vs-lisse}.
%Let us show (i) implies  (ii).
%The condition (i) is equivalent to that
%$J_{M/C_2(M)}=\Ring{\VA}^*$.
%This means that  any homogeneous element $a$ of $\Ring{\VA}$ of positive degree 
%is nilpotent.
%Since $J_{\gr^F M}$ is a vertex Poisson ideal,
%$T^ja$ is also nilpotent for all $j\geq 0$.
%Because $\gr \VA$ is generated by $\Ring{\VA}   $ as a defferentail algebra
%we get that
%$(\gr^F \VA)^*=J_{\gr^F M}$.
%This completes the proof.
\end{proof}

%and
%\begin{align*}
% J_{\gr^\Gamma M}:=\sqrt{\Ann(\gr^\Gamma M)}
%=\bigcap_{\p\in SS(M)}\p.
%\end{align*}
%The closed points of $SS(M)$ are 
%the zero set of $J_{\gr^\Gamma M}$.
%
%\begin{Pro}
%\begin{enumerate}
% \item $J_{\gr \VA}$ is a vertex Poisson ideal of $\gr \VA$.
%\item Suppose that $\VA$ is conformal.
%Let $M$ be a $\VA$-module,
%$\{\Gamma^p M\}$ a compatible filtration.
%Then $J_{\gr^\Gamma M}$ is a vertex Poisson ideal of $\Ring{\VA}_{\infty}$.
%\end{enumerate}
%\end{Pro}
%\begin{proof}
% In either cases the associated graded space
%is $(\gr V,T)$-module.
%Thus the assertion follows from   
% \ref{Pro:radical-of-VPA-ideal}.
%\end{proof}
%\subsection{The asymptotic growth of characters
%and the $C_2$-cofinitness condition  }
%Assume that $V$ is of CFT type,
%that is,
%  conformal be $\Z_{\geq 0}$-graded and $V_0=\C$.
%Then the modified character
%\begin{align*}
% \chi_V=q^{-\frac{c_V}{24}}\sum_{\Delta\in \Z_{\geq 0}}q^{\Delta}(\dim V_{\Delta})
%\end{align*}
%is a well-defined function on the upper half plane $H$, where
%$q=e^{2\pi \tau}$ with $\tau\in H$.

\subsection{Example: Virasoro vertex algebras}
Let $\mc{L}=\bigoplus_{n\in \Z}\C L_n\+\C \mathbf{c}$ be the Virasoro
algebra, with the commutation relation
\begin{align*}
 &[L_m,L_n]=(m-n)L_{m+n}+\frac{(m^3-m)}{12}\delta_{m+n,0}\mathbf{c},\\
&[\mathbf{c}, \mc{L}]=0.
\end{align*}
Define the subalgebras
$\mc{L}_{\geq 0}=\bigoplus_{n\in \Z}\C L_n \+ \C \mathbf{c}$,
$\mc{L}_{<0}=\bigoplus_{n<0}\C L_n$,
so that $\mc{L}=\mc{L}_{<0}\+ \mc{L}_{\geq 0}$.
For $c\in \C$,
denote by $\C_c$  the one-dimensional representation of $\mc{L}_{\geq 0}$
on which $L_n$ acts trivially and $\mathbf{c}$ acts as the
multiplication
by $c$.
Define
$M_c=U(\mc{L})\*_{U(\mc{L}_{\geq 0})}\C_c$.
Then $L_{-1}(1\*1)\in M_c$ is annihilated by all
$L_n$ with $n>0$.
Set
\begin{align*}
 Vir^c=M_c/U(\mc{L})L_{-1}(1\*1).
\end{align*}
As is well-known
there is a unique vertex algebra 
structure on
$Vir^c$ such that the image of $1\* 1$
is the vacuum vector $\1$ and 
\begin{align*}
 &Y(L_{-2}\1,z)=L(z):=\sum_{n\in \Z}L_{n}z^{-n-2}.
\end{align*}
The vertex algebra  $Vir^c$ is called
 the {\em universal Virasoro vertex algebra with central charge
$c$}.
Any   $\mc{L}$-module 
with central charge $c$
(i.e., $\mathbf{c}$ acts as the multiplication by $c$)
on which $L(z)$ is a field
can be  considered as a $Vir^c$-module.

We have
 \begin{align}
		 \Ring{Vir^c}
\cong \C[x]
\label{eq:R-for-Virasoro}
		\end{align} (with the trivial Poisson structure),
where $x$ is the image of $L_{-2}\1$.
Let
$N_c$ its unique maximal submodule
of $Vir_c$,
and 
$Vir_c=\Vir^c/N_c$ 
its unique  simple quotient.

\begin{Pro}%[{Beilinson, Feigin and Mazur \cite{BeiFeiMaz}\footnote{The definition
%of the singular support in \cite{BeiFeiMaz} is slightly different from the one given in this note.}}]
  The following are equivalent.
\begin{enumerate}
 \item $Vir_c$ is $C_2$-cofinite.
\item $Vir^c$ is reducible.
\item $c= 1-6(p-q)^2/pq$
for some $p,q\in \Z_{\geq 2}$
such that $(p,q)=1$.
(These are precisely the central charges of the minimal series
representations of ${\mc{L}}$.)
\end{enumerate}
\end{Pro}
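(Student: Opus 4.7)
The plan is to reduce the three-way equivalence to classical input from Virasoro representation theory combined with the associated-variety machinery of Section~\ref{section:associated-variety}. The equivalence (ii)$\Leftrightarrow$(iii) is a reformulation of the Kac determinant formula applied to $\Vir^c=M_c/U(\mc{L})L_{-1}(1\otimes 1)$: the existence of a singular vector of positive weight in $\Vir^c$ amounts exactly to $c$ belonging to the minimal series. I would cite this classical fact rather than reprove it.

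The main content is then (i)$\Leftrightarrow$(ii). Since the Li filtration is functorial under vertex algebra surjections, the quotient $\Vir^c\twoheadrightarrow \Vir_c$ induces $\Ring{\Vir_c}\cong \C[x]/\overline{N_c}$, where $\overline{N_c}$ denotes the image of $N_c$ in $\Ring{\Vir^c}\cong \C[x]$ from (\ref{eq:R-for-Virasoro}). The generator $x$ has conformal weight~$2$, so $\Ring{\Vir^c}$ is non-negatively graded with $\Ring{\Vir^c}_0=\C$, and by Remark~\ref{Rem:if-graded} the associated variety $\Ass{\Vir_c}$ is a conical Poisson subscheme of $\Ass{\Vir^c}=\Spec \C[x]$; the only possibilities as reduced subvarieties are $\Spec\C[x]$ itself and the reduced point $\{0\}$. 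Hence $\Vir_c$ is $C_2$-cofinite iff $\overline{N_c}\neq 0$. The direction (i)$\Rightarrow$(ii) follows at once by contraposition: if $\Vir^c$ is irreducible then $N_c=0$, whence $\overline{N_c}=0$ and $\Ring{\Vir_c}=\C[x]$ is infinite-dimensional.

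For (ii)$\Rightarrow$(i), it remains to produce a vector in $N_c$ with nonzero image in $\C[x]$. Using that $\omega=L_{-2}\1$ strongly generates $\Vir^c$, together with Proposition~\ref{Pro:F=G}, a PBW monomial $L_{-n_1}\cdots L_{-n_r}\1$ (with $n_1\geq\cdots\geq n_r\geq 2$ and $\sum n_i=\Delta$) lies in $G_{2r}\Vir^c$, and therefore in $F^1\Vir^c_\Delta=G_{\Delta-1}\Vir^c_\Delta$ unless $2r=\Delta$, i.e.\ unless all $n_i=2$. Hence $\Ring{\Vir^c}_\Delta$ is one-dimensional for even $\Delta$, spanned by the image of $L_{-2}^{\Delta/2}\1$, and vanishes for odd $\Delta$; a vector $v\in \Vir^c_\Delta$ therefore survives in $\Ring{\Vir^c}$ exactly when its PBW expansion has nonzero coefficient on $L_{-2}^{\Delta/2}\1$. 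By (ii)$\Leftrightarrow$(iii), when $N_c\neq 0$ we have $c=1-6(p-q)^2/pq$ with coprime $p,q\geq 2$, and $\Vir^c$ admits a singular vector $v_{p,q}\in N_c$ of weight $(p-1)(q-1)$, which is even because $\gcd(p,q)=1$ forces at least one of $p,q$ to be odd.

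The main obstacle is the final step: certifying that the coefficient of $L_{-2}^{(p-1)(q-1)/2}\1$ in $v_{p,q}$ is nonzero. This is a structural fact about Virasoro singular vectors, not a formal consequence of the filtration machinery. I would handle it either by invoking the explicit Benoit--Saint-Aubin formula for $v_{p,q}$, or by a one-parameter deformation argument in the central charge that tracks the top $L_{-2}$-power term, or---most efficiently---by appealing to the classical $C_2$-cofiniteness of Virasoro minimal model vacuum modules (as in, e.g., Zhu or Wang), which establishes (iii)$\Rightarrow$(i) directly and closes the implication cycle.
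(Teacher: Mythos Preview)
Your proposal is correct and follows essentially the same route as the paper. Both arguments reduce (i)$\Leftrightarrow$(ii) to the claim that $N_c$ has nonzero image in $R_{Vir^c}\cong\C[x]$ whenever $N_c\neq 0$, and both treat (ii)$\Leftrightarrow$(iii) as a classical citation; the paper simply cites \cite{Wan93} and \cite{GorKac07} for the nonvanishing of $\overline{N_c}$ directly, whereas you unpack the statement into ``the $L_{-2}^{(p-1)(q-1)/2}\1$ coefficient of the singular vector is nonzero'' before citing essentially the same sources.
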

\begin{proof}
It is known that
the image of $N_c$ in $\Ring{Vir^c}$ is nonzero
if $N_c\ne 0$ (see e.g.\ \cite[Lemma 4.2 and Lemma 4.3]{Wan93} or \cite[Proposition 4.3.2]{GorKac07}).
Therefore 
$\Ass{Vir_c}=\{0\}$ 
%(or equivalently, $Vir_c$ is $C_2$-cofinite,
%or $SS(Vir_c)=\{0\}$) 
if and only if $Vir^c$ is not irreducible.
This happens if and only if 
the central charge is of the form in (iii)
(\cite{Kac74,FeuFuc84,GorKac07}).
\end{proof}

We now compare the $C_2$-cofiniteness condition with 
the zero singular support condition of Beilinson, Feigin and Mazur \cite{BeiFeiMaz}.
Let $\{U_p(\mc{L})\}$ be the standard filtration of $U(\mc{L})$
as in Introduction.
Then the associated graded algebra $\gr U(\mc{L})$
is isomorphic to the symmetric algebra $S(\mc{L})$.
Let $M$ be a highest weight representation of $\mc{L}$
with central charge  $c$,
$v$ the highest weight vector of $M$.
Then $\{U_p(\mc{L})v\}$ defines an
increasing filtration of $M$
compatible with the
standard filtration of $U(\mc{L})$.
(Note that 
$U_p(\mc{L})v=U_p(\mc{L}_{<0})v$.)
Let $\gr^{Lie}M$ denote the associated graded space
$\bigoplus_{p}U_{p}(\mc{L})v
/U_{p+1}(\mc{L})v$.
Then $\gr^{Lie}M$ is an  $S(\mc{L})$-module
generated by the image of $v$.
The singular support $SS^{BFM}(M)$
defined in \cite{BeiFeiMaz}
is by definition the support $\on{supp}_{S(\mc{L})}\gr^{Lie}(M)$.
Clearly, we have $SS^{BFM}(M)\subset \mc{L}_{<0}^*$.
\begin{Pro}\label{Pro:comparison}
 Let $M$ be a highest weight representation of
$\mc{L}$ with central charge $c$.
Then the following are equivalent.
\begin{enumerate}
 \item $M$ is $C_2$-cofinite.
\item $SS^{BFM}(M)=\{0\}$.
\end{enumerate}
\end{Pro}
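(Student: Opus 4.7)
The plan is to combine Theorem \ref{Th:C2-vs-lisse-modules}, which equates (i) with the vanishing of the vertex-algebra singular support $SS(M) \subset (\Ass{Vir^c})_\infty$, with a direct dictionary linking $SS(M)$ to $SS^{BFM}(M)$. Since $\Ring{Vir^c} = \C[x]$ with $x = \overline{L_{-2}\1}$ by (\ref{eq:R-for-Virasoro}), the ring $(\Ring{Vir^c})_\infty \cong \C[x, Tx, T^2 x, \ldots]$ (via Proposition \ref{Pro:maps-from-jet-scheme}) closely mirrors $S(\mc{L}_{<0}) = \C[L_{-1}, L_{-2}, \ldots]$, except for the absence of the $L_{-1}$-coordinate. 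First I would describe $C_2(M)$ and $M/C_2(M)$ explicitly: by Lemma \ref{Lem:C2-space} and the commutation relation $[L_{-i}, L_{-j}] = (j-i)L_{-(i+j)}$, the subspace $C_2(M)$ is spanned by those PBW monomials $L_{-k_1}\cdots L_{-k_r} v$ that contain at least one $L_{-k}$ with $k \geq 3$, so $M/C_2(M)$ is spanned by the images of $\{L_{-2}^a L_{-1}^b v : a,b \geq 0\}$ and is $L_0$-graded with bounded weight-space dimensions.

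For the direction (i) $\Rightarrow$ (ii), finite-dimensionality of the graded space $M/C_2(M)$ forces the weight-raising operators $L_{-1} = \omega^M_{(0)}$ and $x$ both to act nilpotently, yielding $L_{-1}^{N_1} v,\, L_{-2}^{N_2} v \in C_2(M)$ for some $N_1, N_2$. Expanding such a containment as $\sum_i L_{-j_i} u_i$ with $j_i \geq 3$ and bounding the PBW length of each $u_i$ by its $L_0$-weight gives $L_{-1}^{N_1} v \in U_{N_1-2}(\mc{L}_{<0}) v$, hence $L_{-1}^{N_1} \in \Ann_{S(\mc{L}_{<0})}(\bar v)$. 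The plan is then to deduce $L_{-n}^{k_n}$-nilpotency on $\bar v$ for all $n \geq 2$ from this, using the identity $L_{-n} = \frac{1}{(n-2)!}(\ad L_{-1})^{n-2} L_{-2}$, the parallel containment $L_{-2}^{N_2} v \in C_2(M)$, and induction on $n$.

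For the direction (ii) $\Rightarrow$ (i), assuming each $L_{-n}$ has a power in $\Ann_{S(\mc{L}_{<0})}(\bar v)$, the reductions of $L_{-1}^{k_1}v$ and $L_{-2}^{k_2} v$ to shorter PBW monomials, combined with the commutation relations that push any $L_{-k}$ with $k \geq 3$ to the front of a monomial (placing it into $C_2(M)$), would be iterated to bound the $L_0$-weights on which $M/C_2(M)$ is supported, yielding $\dim M/C_2(M) < \infty$.

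The principal obstacle is the quantitative step in the forward direction for $L_{-n}$ with $n \geq 2$: the naive PBW-length bound from $L_{-n}^k v \in C_2(M)$ yields only $L_{-n}^k v \in U_{nk-2}(\mc{L}_{<0}) v$, whereas the BFM condition demands the sharper $U_{k-1}(\mc{L}_{<0}) v$. Closing this gap is the crux, and I expect to use the iterated-commutator identity together with the already-established $L_{-1}$-nilpotency to deduce the sharper bound inductively; the delicate point is that $L_{-1}$ plays two different roles (as the differential $T$ in the Li filtration and as a scalar generator in the PBW filtration), and reconciling them is the chief technical difficulty.
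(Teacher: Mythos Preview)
Your proposal correctly identifies the structure of $M/C_2(M)$ and correctly obtains $L_{-1}\in\sqrt{I}$ (where $I=\Ann_{S(\mc{L}_{<0})}\gr^{Lie}M$) via the $L_0$-weight bound. But there are two genuine gaps.

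First, the ``principal obstacle'' you flag for $L_{-2}$ is easier than you suggest, and the paper dispatches it with a sharper $L_0$-argument. From $C_2$-cofiniteness there is a bound $K$ so that every vector of level $>K$ lies in $C_2(M)$. Starting from $L_{-2}^p v$ at level $2p$ and repeatedly rewriting through $C_2(M)=\sum_{n\geq 3}L_{-n}M$, each step consumes at least $3$ units of level while adding $1$ to the PBW length; once the level drops to $\leq K$ the remaining factor has PBW length $\leq K$. Hence $L_{-2}^p v\in U_{\lfloor 2p/3\rfloor+K}(\mc{L}_{<0})v$, which is contained in $U_{p-1}(\mc{L}_{<0})v$ for $p>3K$. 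So $L_{-2}\in\sqrt{I}$. (The same iteration fails for $L_{-n}$ with $n\geq 3$, since $np/3\not<p$.)

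Second, and more seriously, you are missing the key external input that the paper uses to pass from $L_{-1},L_{-2}\in\sqrt{I}$ to $SS^{BFM}(M)=\{0\}$: Gabber's theorem, which says that $\sqrt{I}$ is involutive (closed under the Poisson bracket on $S(\mc{L}_{<0})$). Since $\{L_{-1},L_{-n}\}=(n-1)L_{-(n+1)}$, once $L_{-1},L_{-2}\in\sqrt{I}$ one gets all $L_{-n}\in\sqrt{I}$ immediately. Your proposed route through the enveloping-algebra identity $L_{-n}=\tfrac{1}{(n-2)!}(\ad L_{-1})^{n-2}L_{-2}$ does not transfer to $\gr^{Lie}M$ because the associated graded is commutative; what survives is precisely the Poisson bracket, and showing $\sqrt{I}$ is Poisson-closed is exactly Gabber's theorem. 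Without invoking it (or reproving it in this special case) the forward direction does not close.

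Finally, the detour through Theorem~\ref{Th:C2-vs-lisse-modules} and a ``dictionary'' between $SS(M)$ and $SS^{BFM}(M)$ is not how the paper proceeds and does not help: the two filtrations differ precisely in how $L_{-1}$ is treated, and comparing them amounts to the same $L_0$-bookkeeping you end up doing directly. The paper's proof is entirely self-contained at the level of $M$, $C_2(M)$, and $\gr^{Lie}M$, with Gabber's theorem as the one external ingredient.
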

\begin{proof}
First, by Lemma \ref{Lem:C2-space}
we have
\begin{align*}
 C_2(M)=\haru_{\C}\{ L_{-n}m; m\in M, n\geq 3\}
\end{align*}
because $Vir^c$ is strongly generated by
$L_{-2}\1$.
It follows that
$M/C_2(M)$ is spanned by the images of the vectors
of the form
$
 L_{-2}^m L_{-1}^n v
$ with $m,n\geq 0$,
where 
 $v$ is the highest weight vector of $M$.
In particular
$M/C_2(M)$ is generated by
the image of the vectors 
$L_{-1}^nv$ with $n\geq 0$
over $R_{Vir^c}$
by (\ref{eq:R-for-Virasoro}).

Suppose that
$SS^{BFM}(M)=\{0\}$.
We then have
$L_{-1}^p v\in U_{p-1}(\mc{L}_{<0})v$ 
for a  sufficiently large $p$.
By considering the $L_0$-eigenvalue
we find that this is
equivalent
to $L_{-1}^p v\in \sum_{n\geq 2}L_{-n}U(\mc{L}_{<0})v$.
This happens if and only if
$M/C_2(M)$ is 
finitely generated over $\Ring{Vir^c}$.
We also have 
$L_{-2}^p v\in U_{p-1}(\mc{L}_{<0})v$ 
for a  sufficiently large $p$.
By taking account 
of the $L_0$-eigenvalue
it follows that
this 
is equivalent
to $L_{-2}^p v\in \sum_{n\geq 3}L_{-n}U(\mc{L}_{<0})v
= C_2(M)$.
Therefore  $M$ is $C_2$-cofinite.

Conversely,
 suppose that
$M$ is $C_2$-cofinite.
From the above argument it follows that
$L_{-1}$ and $L_{-2}$ act nilpotently on 
$\gr^{Lie}M$.
Then $SS^{BFM}(M)$ must be $\{0\}$,
because 
$L_{-1}$ and $L_{-2}$ generates 
$\mc{L}_{<0}$ and
$\sqrt{\Ann_{S(\mc{L}_{<0})}\gr^{Lie}M}$ is involtive
by Gabber's theorem.
\end{proof}

By Proposition \ref{Pro:comparison}
a result of
 \cite{BeiFeiMaz} can be read as follows.
\begin{Th}[\cite{BeiFeiMaz}]
Let $M$ be an irreducible highest weight representation of $\mc{L}$
with central charge $c$.
Then the following are equivalent.
\begin{enumerate}
 \item $M$ is $C_2$-cofinite.
\item $M$ is isomorphic to one of the minimal series representations of $\mc{L}$.
\end{enumerate}
\end{Th}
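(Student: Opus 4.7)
The plan is essentially to combine Proposition \ref{Pro:comparison} with the singular support computation already carried out in \cite{BeiFeiMaz}. By Proposition \ref{Pro:comparison}, condition (i) is equivalent to $SS^{BFM}(M)=\{0\}$, so the theorem reduces to the assertion, originally established by Beilinson--Feigin--Mazur, that an irreducible highest weight $\mc{L}$-module has vanishing BFM singular support if and only if it is a minimal series representation. Thus the body of the proof is almost tautological once Proposition \ref{Pro:comparison} is granted, and the remaining content is to identify the two directions with known facts.

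For the implication (ii) $\Rightarrow$ (i), I would argue as follows. If $c=1-6(p-q)^2/pq$ with $(p,q)=1$, $p,q\geq 2$, then by the previous proposition $Vir_c$ is $C_2$-cofinite. The minimal series representations are precisely the simple modules over $Vir_c$, and one checks (using Lemma \ref{Lem:C2-space} together with the fact that $R_{Vir_c}$ is a finite-dimensional quotient of $R_{Vir^c}=\C[x]$) that any simple $Vir_c$-module $M$ has $M/C_2(M)$ finite-dimensional over $R_{Vir_c}$, so $M$ is $C_2$-cofinite. Alternatively, one can directly invoke the BFM computation of $SS^{BFM}(M)$ for minimal series representations and apply Proposition \ref{Pro:comparison}.

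For (i) $\Rightarrow$ (ii), suppose $M$ is $C_2$-cofinite. Then Proposition \ref{Pro:comparison} gives $SS^{BFM}(M)=\{0\}$; in particular $L_{-2}^p v\in C_2(M)$ for some $p$, so (using the explicit description of $C_2(M)$ from Lemma \ref{Lem:C2-space}) the image of $x=L_{-2}\mathbf{1}$ in $R_{Vir^c}$ acts nilpotently on $M/C_2(M)$. Unwinding this on the level of the vertex algebra, $M$ must in fact be a module over the simple quotient $Vir_c$: otherwise the maximal submodule $N_c$ of $Vir^c$ would act nontrivially on $M$, but its image in $R_{Vir^c}\cong\C[x]$ is nonzero (by the lemmas of Wang and Gorelik--Kac cited in the previous proposition), contradicting nilpotent action of $x$ combined with the fact that an irreducible highest weight module admits no finite-dimensional quotient by a $V$-submodule unless the corresponding quotient of $Vir^c$ acts. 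Hence $Vir_c$ admits an irreducible $C_2$-cofinite highest weight module, which forces $Vir_c$ itself to be $C_2$-cofinite (since $Vir_c\hookrightarrow M$ via the vacuum and $C_2$-cofiniteness descends), and the previous proposition then forces $c$ to be of minimal series form, with $M$ one of the minimal series representations.

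The main obstacle is the direction (i) $\Rightarrow$ (ii): the delicate point is ensuring that $C_2$-cofiniteness of an irreducible highest weight module $M$ actually forces the central charge to lie in the minimal series, rather than just yielding finiteness on some graded piece. This is exactly where one must invoke the nontriviality of the image of $N_c$ in $R_{Vir^c}$ (a fact already used in the previous proposition) to transfer information from $M$ back to $Vir^c$; the cleanest route is to simply cite \cite{BeiFeiMaz} for the full classification of irreducibles with $SS^{BFM}=\{0\}$ and let Proposition \ref{Pro:comparison} do the translation.
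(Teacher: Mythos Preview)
Your opening paragraph is exactly the paper's approach: the paper gives no proof at all beyond the sentence ``By Proposition \ref{Pro:comparison} a result of \cite{BeiFeiMaz} can be read as follows,'' so the theorem is simply the Beilinson--Feigin--Mazur classification of irreducible highest weight $\mc{L}$-modules with $SS^{BFM}=\{0\}$, translated into the $C_2$-cofiniteness language via Proposition \ref{Pro:comparison}. Your final sentence (``the cleanest route is to simply cite \cite{BeiFeiMaz}\ldots'') is therefore not just the cleanest route but the only route the paper takes.

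Your attempted direct arguments in the middle paragraphs are extraneous, and the one for (i)~$\Rightarrow$~(ii) has a genuine gap. The sentence ``$Vir_c\hookrightarrow M$ via the vacuum and $C_2$-cofiniteness descends'' is not correct: a simple vertex algebra does not embed into an arbitrary simple module, and $C_2$-cofiniteness of a module does not immediately propagate back to the vertex algebra in this way. Likewise, the argument that nilpotency of $x$ on $M/C_2(M)$ forces $M$ to be a $Vir_c$-module is muddled---the image of $N_c$ in $R_{Vir^c}$ being nonzero tells you about $Vir^c/C_2(Vir^c)$, not directly about how $N_c$ acts on $M$. A correct direct argument for this direction would require substantially more work (essentially reproving the BFM classification), which is precisely why both you and the paper defer to \cite{BeiFeiMaz}. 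I would simply drop the middle two paragraphs.
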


\subsection{Example: affine vertex algebras}
Let $\fing$,
$\affg$,
$V^k(\fing)$ be as in Example \ref{Example:universal-afine}.

For a weight $\lam$ of $\affg$,
let $L(\lam)$ be the 
irreducible (graded) representation of $\affg$
of highest weight $\lam$.
If $\lam$ is of level $k$,
that is,
if $\lam(K)=k$,
then $L(\lam)$ can be naturally considered as a (simple)
module over $V^k(\fing)$.
In particular
$L(k\Lam_0)$
(in the notation of \cite{Kac90})
is isomorphic to
the unique simple graded quotient
of $V^k(\fing)$. 

Let $\lam(K)=k$.
Because 
%form Proposition \ref{Pro:F=G}
%it follows that
\begin{align*}
C_2(L(\lam))=\haru_{\C}\{x_{(-n)}m; x\in \fing,\ m\in L(\lam),\
n\geq 2\}
\end{align*} 
by Lemma \ref{Lem:C2-space},
it follows that
$L(\lam)$ 
 is finitely strongly generated over $V^k(\fing)$
if and only if 
the highest weight vector of $L(\lam)$
generates
a finite-dimensional $\fing$-module,
or equivalently,
the restriction of $\lam$ to the Cartan subalgebra of $\fing$
is integral dominant.
This happens if and only if 
$L(\lam)$
is a direct sum of finite-dimensional
$\fing$-modules.
In this case
$X_{L(\lam)}$ is a $\Ad G$-invariant, conic,
 Poisson subvariety of $\fing^*$.

We give a simple proof of the following well-known
assertion (cf. \cite{Zhu96}).
 \begin{Pro}%[\cite{Zhu96}]
\label{Pro:integral-modules-are-C2-cofinite}
Let $\lam$ be a weight of $\affg$,
and set $k=\lam(K)$.
Then
$L(\lam)$ is a $C_2$-cofinite  $V^k(\fing)$-module 
if it is an integrable representation of $\affg$.
In particular 
the simple affine vertex algebra  $L(k\Lam_0)$ is 
$C_2$-cofinite if $k\in \Z_{\geq 0}$.
\end{Pro}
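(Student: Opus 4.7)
The plan is to reduce, via Lemma \ref{Lem:C2-vs-zero-assiciated-variety}, to showing $\Ass{L(\lam)} = \{0\}$, and to prove this by combining the integrability relation for the $0$-th affine simple root with the $\Ad G$-invariance of $\Ass{L(\lam)}$.

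First I record the structural setup. Since $\lam$ is integrable, its restriction $\bar\lam$ to the Cartan of $\fing$ is dominant integral, so the top graded component $V_{\bar\lam} := L(\lam)_{\tp}$ is the finite-dimensional irreducible $\fing$-module of highest weight $\bar\lam$. It follows from the discussion before the proposition that $L(\lam)$ is finitely strongly generated over $V^k(\fing)$, and Lemma \ref{Lem:Li} combined with $\Ring{V^k(\fing)} \cong \C[\fing^*]$ shows that the quotient $R := L(\lam)/C_2(L(\lam))$ is generated as a $\C[\fing^*]$-module by the image $\bar V_{\bar\lam}$ of the top component. The variety $\Ass{L(\lam)}$ is then a closed, conic, $\Ad G$-invariant Poisson subvariety of $\fing^*$, as already noted in the text: conicity comes from the grading, the Poisson property from Lemma \ref{Lem:annihilator-is-Poisson}, and $G$-invariance from the fact that the zero-mode action $x \mapsto x_{(0)}$ makes $R$ a locally finite $\fing$-module compatible, via \eqref{eq:Poisson-modules}, with the Kirillov--Kostant bracket.

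The crucial step is to show that the linear function $\bar e_\theta \in \C[\fing^*]$ vanishes on $\Ass{L(\lam)}$. The $0$-th affine Chevalley generator of $\affg$ is $f_0 = (e_\theta)_{(-1)}$, and integrability gives $(e_\theta)_{(-1)}^{M+1} v_\lam = 0$ with $M = k - \langle\bar\lam,\theta^\vee\rangle \geq 0$. Passing to $R$, this reads $\bar e_\theta^{\,M+1}\bar v_\lam = 0$. To propagate this to all of $\bar V_{\bar\lam}$, I would use \eqref{eq:Poisson-modules} in the commutator form $[L_{\bar x}, \bar e_\theta^{\,N}] = N\, \bar e_\theta^{\,N-1}\,\overline{[x,e_\theta]}$, valid because multiplication by the linear function $\overline{[x,e_\theta]}$ commutes with multiplication by $\bar e_\theta$ in the commutative ring $\C[\fing^*]$. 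An induction on $r$ then yields $\bar e_\theta^{\,M+1+r}\, L_{\bar x_1}\cdots L_{\bar x_r}\bar v_\lam = 0$ for every $r$ and every $x_1,\dots,x_r \in \fing$. Since $\bar V_{\bar\lam}$ is finite-dimensional and generated from $\bar v_\lam$ by the operators $L_{\bar x}$ with $x \in \fing$, some uniform power $\bar e_\theta^{\,N_0}$ annihilates $\bar V_{\bar\lam}$, hence all of $R = \C[\fing^*]\bar V_{\bar\lam}$ by commutativity. Thus $\bar e_\theta \in \sqrt{\Ann_{\C[\fing^*]}(R)}$, i.e.\ $\bar e_\theta$ vanishes identically on $\Ass{L(\lam)}$.

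To conclude, I invoke the $\Ad G$-invariance of $\Ass{L(\lam)}$: for every $g \in G$ the translated linear function $\overline{\Ad g\cdot e_\theta}$ also vanishes on $\Ass{L(\lam)}$. Since $\fing$ is simple, the $\Ad G$-orbit of $e_\theta$ spans $\fing$ linearly (any $\Ad$-invariant nonzero subspace is a nonzero ideal, hence all of $\fing$). Therefore every $\bar x$ with $x \in \fing$ vanishes on $\Ass{L(\lam)}$, forcing $\Ass{L(\lam)} \subseteq \{0\}$. As $R \ne 0$, in fact $\Ass{L(\lam)} = \{0\}$, and Lemma \ref{Lem:C2-vs-zero-assiciated-variety} yields the $C_2$-cofiniteness of $L(\lam)$. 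The main obstacle is the middle step: converting a single nilpotency relation at the highest weight vector into nilpotency of $\bar e_\theta$ on the entire top $\fing$-module $\bar V_{\bar\lam}$; here the commutativity of $\C[\fing^*]$ is precisely what makes the commutator trick collapse the error terms, after which $G$-invariance does the rest essentially for free.
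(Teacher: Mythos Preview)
Your proof is correct, but the paper's argument reaches the same conclusion more quickly by exploiting integrability more fully. Rather than using only the single relation $(e_\theta)_{(-1)}^{M+1}v_\lam=0$ at the highest weight vector, the paper uses that on an integrable $\affg$-module every real root vector acts locally nilpotently; in particular $(x_\alpha)_{(-1)}$ acts locally nilpotently on all of $L(\lam)$ for every root $\alpha$ of $\fing$. Since $L(\lam)/C_2(L(\lam))$ is generated over $\C[\fing^*]$ by the finite-dimensional top piece, this immediately yields $\bar x_\alpha\in\sqrt{J}$ with $J=\Ann_{\C[\fing^*]}(L(\lam)/C_2(L(\lam)))$, so your commutator/propagation step from $\bar v_\lam$ to all of $\bar V_{\bar\lam}$ becomes unnecessary. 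For the final step, in place of $\Ad G$-invariance the paper invokes the algebraic fact (Corollary~\ref{Co:radical-of-VPA-ideal} together with Lemma~\ref{Lem:annihilator-is-Poisson}) that $\sqrt{J}$ is a Poisson ideal of $\C[\fing^*]$: then $\sqrt{J}\cap\fing$ is a nonzero Lie ideal of the simple Lie algebra $\fing$, hence equals $\fing$, forcing $\Ass{L(\lam)}=\{0\}$. Your $\Ad G$-invariance argument is simply the integrated form of this same observation. The trade-off is that the paper's route is shorter and computation-free, whereas yours pinpoints exactly which single integrability relation suffices and would adapt to settings where nilpotency is known only at one distinguished vector.
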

\begin{proof}
Suppose that
 $L(\lam)$ is integrable
and set  $J=\Ann_{\Ring{V^k(\fing)}}L(\lam)/C_2(L(\lam))$.
Let
$x_{\alpha}$ is any root vector of $\fing$.
Because
$(x_{\alpha})_{(-1)}$ acts locally nilpotently on $L(\lam)$,
its image $x_{\alpha}\in \C[\fing^*]=\Ring{V^k(\fing)}$
belongs to  $\sqrt{J}$.
As
$\sqrt{J}$ is an Poisson ideal of $\C[\fing^*]$
by 
 Corollary  \ref{Co:radical-of-VPA-ideal}
and Lemma \ref{Lem:annihilator-is-Poisson},
$\sqrt{J}$ contains $\{\fing,x_{\alpha}\}=\fing$,
proving that
$\Ass{L(\lam)}=\{0\}$.
\end{proof}
One can show that
the converse of Proposition \ref{Pro:integral-modules-are-C2-cofinite}
is also true, that is,
$L(\lam)$ is $C_2$-cofinite if and only if it is integrable,
see \cite{DonMas06,Ara09b}.

\bibliographystyle{jalpha}
\bibliography{math}

\begin{thebibliography}{Hua08b}

\bibitem[{Ara}]{Ara09b}
T.~{Arakawa}.
\newblock {Associated varieties of modules over Kac-Moody algebras and
  $C_2$-cofiniteness of $W$-algebras}.
\newblock {\em preprint}.
\newblock arXiv:1004.1554[math.QA].

\bibitem[BD04]{BeiDri04}
Alexander Beilinson and Vladimir Drinfeld.
\newblock {\em Chiral algebras}, Vol.~51 of {\em American Mathematical Society
  Colloquium Publications}.
\newblock American Mathematical Society, Providence, RI, 2004.

\bibitem[BFM]{BeiFeiMaz}
A.~Beilinson, B.~Feigin, and B~Mazur.
\newblock Introduction to algebraic field theory on curves.
\newblock {\em preprint}.

\bibitem[Bor86]{Bor86}
Richard~E. Borcherds.
\newblock Vertex algebras, {K}ac-{M}oody algebras, and the {M}onster.
\newblock {\em Proc. Nat. Acad. Sci. U.S.A.}, Vol.~83, No.~10, pp. 3068--3071,
  1986.

\bibitem[CO78]{CasOsb78}
William Casselman and M.~Scott Osborne.
\newblock The restriction of admissible representations to {${\mathfrak{n}}$}.
\newblock {\em Math. Ann.}, Vol. 233, No.~3, pp. 193--198, 1978.

\bibitem[DLM00]{DonLiMas00}
Chongying Dong, Haisheng Li, and Geoffrey Mason.
\newblock Modular-invariance of trace functions in orbifold theory and
  generalized {M}oonshine.
\newblock {\em Comm. Math. Phys.}, Vol. 214, No.~1, pp. 1--56, 2000.

\bibitem[DLM02]{DonLiMas02}
Chongying Dong, Haisheng Li, and Geoffrey Mason.
\newblock Vertex {L}ie algebras, vertex {P}oisson algebras and vertex algebras.
\newblock In {\em Recent developments in infinite-dimensional {L}ie algebras
  and conformal field theory ({C}harlottesville, {VA}, 2000)}, Vol. 297 of {\em
  Contemp. Math.}, pp. 69--96. Amer. Math. Soc., Providence, RI, 2002.

\bibitem[DM06]{DonMas06}
Chongying Dong and Geoffrey Mason.
\newblock Integrability of {$C\sb 2$}-cofinite vertex operator algebras.
\newblock {\em Int. Math. Res. Not.}, pp. Art. ID 80468, 15, 2006.

\bibitem[DSK05]{De-Kac05}
Alberto De~Sole and Victor~G. Kac.
\newblock Freely generated vertex algebras and non-linear {L}ie conformal
  algebras.
\newblock {\em Comm. Math. Phys.}, Vol. 254, No.~3, pp. 659--694, 2005.

\bibitem[DSK06]{De-Kac06}
Alberto De~Sole and Victor~G. Kac.
\newblock Finite vs affine {$W$}-algebras.
\newblock {\em Japan. J. Math.}, Vol.~1, No.~1, pp. 137--261, 2006.

\bibitem[EM09]{EinMus}
Lawrence Ein and Mircea Musta{\c{t}}{\u{a}}.
\newblock Jet schemes and singularities.
\newblock In {\em Algebraic geometry---{S}eattle 2005. {P}art 2}, Vol.~80 of
  {\em Proc. Sympos. Pure Math.}, pp. 505--546. Amer. Math. Soc., Providence,
  RI, 2009.

\bibitem[FBZ04]{FreBen04}
Edward Frenkel and David Ben-Zvi.
\newblock {\em Vertex algebras and algebraic curves}, Vol.~88 of {\em
  Mathematical Surveys and Monographs}.
\newblock American Mathematical Society, Providence, RI, second edition, 2004.

\bibitem[FF84]{FeuFuc84}
B.~L. Fe{\u\i}gin and D.~B. Fuchs.
\newblock Verma modules over the {V}irasoro algebra.
\newblock In {\em Topology ({L}eningrad, 1982)}, Vol. 1060 of {\em Lecture
  Notes in Math.}, pp. 230--245. Springer, Berlin, 1984.

\bibitem[GK07]{GorKac07}
Maria Gorelik and Victor Kac.
\newblock On simplicity of vacuum modules.
\newblock {\em Adv. Math.}, Vol. 211, No.~2, pp. 621--677, 2007.

\bibitem[GN03]{GabNei03}
Matthias~R. Gaberdiel and Andrew Neitzke.
\newblock Rationality, quasirationality and finite {$W$}-algebras.
\newblock {\em Comm. Math. Phys.}, Vol. 238, No. 1-2, pp. 305--331, 2003.

\bibitem[Hua08a]{Hua08rigidity}
Yi-Zhi Huang.
\newblock Rigidity and modularity of vertex tensor categories.
\newblock {\em Commun. Contemp. Math.}, Vol.~10, No. suppl. 1, pp. 871--911,
  2008.

\bibitem[Hua08b]{Hua08}
Yi-Zhi Huang.
\newblock Vertex operator algebras and the {V}erlinde conjecture.
\newblock {\em Commun. Contemp. Math.}, Vol.~10, No.~1, pp. 103--154, 2008.

\bibitem[Kac74]{Kac74}
V.~G. Kac.
\newblock Infinite-dimensional {L}ie algebras, and the {D}edekind {$\eta
  $}-function.
\newblock {\em Funkcional. Anal. i Prilo\v zen.}, Vol.~8, No.~1, pp. 77--78,
  1974.

\bibitem[Kac90]{Kac90}
Victor~G. Kac.
\newblock {\em Infinite-dimensional {L}ie algebras}.
\newblock Cambridge University Press, Cambridge, third edition, 1990.

\bibitem[Kac98]{Kac98}
Victor Kac.
\newblock {\em Vertex algebras for beginners}, Vol.~10 of {\em University
  Lecture Series}.
\newblock American Mathematical Society, Providence, RI, second edition, 1998.

\bibitem[KW08]{KacWak08}
Victor~G. Kac and Minoru Wakimoto.
\newblock On rationality of {$W$}-algebras.
\newblock {\em Transform. Groups}, Vol.~13, No. 3-4, pp. 671--713, 2008.

\bibitem[Li99]{Li99}
Haisheng Li.
\newblock Some finiteness properties of regular vertex operator algebras.
\newblock {\em J. Algebra}, Vol. 212, No.~2, pp. 495--514, 1999.

\bibitem[Li04]{Li04}
Haisheng Li.
\newblock Vertex algebras and vertex {P}oisson algebras.
\newblock {\em Commun. Contemp. Math.}, Vol.~6, No.~1, pp. 61--110, 2004.

\bibitem[Li05]{Li05}
Haisheng Li.
\newblock Abelianizing vertex algebras.
\newblock {\em Comm. Math. Phys.}, Vol. 259, No.~2, pp. 391--411, 2005.

\bibitem[Miy04]{Miy04}
Masahiko Miyamoto.
\newblock Modular invariance of vertex operator algebras satisfying {$C\sb
  2$}-cofiniteness.
\newblock {\em Duke Math. J.}, Vol. 122, No.~1, pp. 51--91, 2004.

\bibitem[NT05]{NagTsu05}
Kiyokazu Nagatomo and Akihiro Tsuchiya.
\newblock Conformal field theories associated to regular chiral vertex operator
  algebras. {I}. {T}heories over the projective line.
\newblock {\em Duke Math. J.}, Vol. 128, No.~3, pp. 393--471, 2005.

\bibitem[Pri99]{Pri99}
Mirko Primc.
\newblock Vertex algebras generated by {L}ie algebras.
\newblock {\em J. Pure Appl. Algebra}, Vol. 135, No.~3, pp. 253--293, 1999.

\bibitem[Wan93]{Wan93}
Weiqiang Wang.
\newblock Rationality of {V}irasoro vertex operator algebras.
\newblock {\em Internat. Math. Res. Notices}, No.~7, pp. 197--211, 1993.

\bibitem[Zhu96]{Zhu96}
Yongchang Zhu.
\newblock Modular invariance of characters of vertex operator algebras.
\newblock {\em J. Amer. Math. Soc.}, Vol.~9, No.~1, pp. 237--302, 1996.

\end{thebibliography}

\end{document}